\newcommand\bB{\boldsymbol B}
\newcommand\bfC{\mathbf C}
\newcommand\bfE{\mathbf E}
\newcommand\bfP{\mathbf P}
\newcommand\bfQ{\mathbf Q}
\newcommand\bfN{\mathbf N}
\newcommand\bfH{\mathbf H}
\newcommand\bfR{\mathbf R}
\newcommand\bfM{\mathbf M}
\newcommand\bL{\boldsymbol L}
\newcommand\bA{\boldsymbol A}
\newcommand\bC{\boldsymbol C}
\newcommand\bW{\boldsymbol W}
\newcommand\bV{\boldsymbol V}
\newcommand\bx{\boldsymbol x}
\newcommand\bv{\boldsymbol v}
\newcommand\bxi{\boldsymbol \xi}
\newcommand\RR{\mathbb R}
\newcommand\LL{\mathbb L}
\newcommand\bfI{\mathbf I}
\newcommand\btheta{\boldsymbol\theta}
\newcommand\bvartheta{\boldsymbol\vartheta}
\def\Tilde{\widetilde}
\def\tilde{\widetilde}
\def\hat{\widehat}
\def\ds{\displaystyle}
\newtheorem{theorem}{Theorem}
\newtheorem{proposition}{Proposition}
\newtheorem{lemma}{Lemma}
\newtheorem{remark}{Remark}
\begin{document}

\begin{frontmatter}

\title{On sampling from a log-concave density using kinetic Langevin diffusions}
\runtitle{Sampling log-concave density by kinetic Langevin}

\begin{aug}
\author{\fnms{Arnak S.}\snm{Dalalyan}\ead[label=e1]{arnak.dalalyan@ensae.fr}}
\and
\author{\fnms{Lionel} \snm{Riou-Durand}\ead[label=e2]{lionel.riou-durand@ensae.fr}}

\runauthor{Dalalyan, A.S. and Riou-Durand, L.}

\affiliation{ENSAE-CREST}

{\renewcommand{\addtocontents}[2]{}
\address{{5, av. Le Chatelier,}\\
{91129 Palaiseau,}
{France} \\
\printead{e1,e2}}
}
\end{aug}


\begin{abstract}
Langevin diffusion processes and their discretizations are often used for sampling from a target density. The most convenient framework for assessing the quality of such a sampling scheme corresponds to smooth and strongly log-concave densities defined on $\RR^p$. The present work focuses on this framework and studies the behavior of the Monte Carlo algorithm based on discretizations of the kinetic Langevin diffusion. We first prove the geometric mixing property of the kinetic Langevin diffusion with a  mixing rate that is optimal in terms of its dependence on the condition number. We then use this result for obtaining improved guarantees of sampling using the kinetic Langevin Monte Carlo method, when the quality of sampling is measured by the Wasserstein distance. We also consider the situation where the Hessian of the log-density of the target distribution is Lipschitz-continuous. In this case, we introduce a new discretization of the kinetic Langevin diffusion and prove that this leads to a substantial improvement of the upper bound on the sampling error measured in Wasserstein distance.
\end{abstract}

\begin{keyword}[class=AMS]
\kwd[Primary ]{62G08}
\kwd{}
\kwd[; secondary ]{62C20,62G05,62G20}
\end{keyword}

\begin{keyword}
\kwd{Markov Chain Monte Carlo}
\kwd{Hamilton Monte Carlo}
\kwd{Kinetic Langevin}
\kwd{Langevin algorithm}
\kwd{Mixing rate}
\end{keyword}

\end{frontmatter}

\maketitle

\itemsep=0pt
\parskip=3pt
\setcounter{tocdepth}{1}
\tableofcontents

\parskip=10pt

\section{Introduction}

Markov processes and, more particularly, diffusion processes are often used in order to solve the problem of sampling from a given density $\pi$. This problem can be formulated as follows. Assume that we are able to generate an arbitrary number of independent standard Gaussian random variables $\xi_1,\ldots,\xi_K$. For a given precision level $\varepsilon>0$ and a given metric $d$ on the space of probability measures, the goal is to devise a function $F_\varepsilon$ such that the distribution $\nu_K$ of the random variable $\vartheta_K = F_\varepsilon(\xi_1,\ldots,\xi_K)$ satisfies $d(\mu_K,\pi)\le \varepsilon$. For solving this task, it is often assumed that we can have access to the evaluations of the probability density function of $\pi$ as well as its derivatives. Among different functions $F_\varepsilon$ having the aforementioned
property, the most interesting are those that require the smallest number of computations.

Markov Chain Monte Carlo methods hinge on random variables $\vartheta_K$ and associated functions $F_\varepsilon$ defined by recursion $\vartheta_{k} = G_\varepsilon(\vartheta_{k-1},\xi_k)$, $k=1,\ldots,K$, where $G_\varepsilon$ is some function of two arguments. For a given target distribution $\pi$, if one succeeds to design a function $G_\varepsilon$ such that the Markov process $\{\vartheta_k;k\in\mathbb N\}$ is ergodic with invariant density $\pi$ then, for large $K$, the distribution of $\vartheta_K$ will be close to $\pi$. Therefore, if the evaluation of $G_\varepsilon$ involves only simple operations, we get a solution of the task of approximate sampling from $\pi$. Of course, it is important to address the problem of the choice of the number of iterations $K$ ensuring that the sampling error is smaller than $\varepsilon$. However, it is even more important to be able to design functions $G_\varepsilon$, often referred to as the update rule,
with desired properties presented above.

Discretization of continuous-time Markov processes is a successful generic method for defining update rules. The idea is to start by specifying a continuous-time Markov process,
$\{L_t:t\ge 0\}$, which is provably positive recurrent and has the target $\pi$ as  invariant distribution\footnote{More generally, one can consider a Markov process having an invariant distribution that is close to $\pi$. }. The second step is to set-up a suitable time-discretization of the continuous-time process.  More precisely, since $\{L_t\}$ is a Markov process, for any step-size $h>0$, there is a mapping $G$ such that $L_{kh} \stackrel{\mathscr D}{=}
G(L_{(k-1)h},\xi_k)$, $k=1,\ldots,K$,  where
$\xi_k$ is a standard Gaussian random variable independent of $L_{(k-1)h}$. This mapping $G$ might not be available in a closed form. Therefore, the last step is to approximate $G$ by a tractable mapping $G_\varepsilon$.
Langevin diffusions are a class of continuous-time Markov processes for which the invariant density is available in closed-form. For this reason, they are suitable candidates for applying the generic approach of the previous paragraph.

Let $m$ and $M$ be two positive constants such that $m\le M$. Throughout this work, we will assume that the target distribution $\pi$ has a density with respect to the Lebesgue measure on $\RR^p$, which is of the form $\pi(\btheta) = C e^{-f(\btheta)}$ for a
function $f$ that
is $m$-strongly convex and with an $M$-Lipschitz gradient. The (highly overdamped) Langevin diffusion having $\pi$ as invariant distribution is defined as a strong solution to the stochastic differential equation
\begin{align}
    d\bL_t = -\nabla f(\bL_t)\,dt + \sqrt{2}\,d\bW_t,\qquad t\ge 0,
    \label{langevin}
\end{align}
where $\bW$ is a $p$-dimensional standard Brownian motion. The update rule associated
to this process, obtained by using the Euler discretization, is given by the
equation
$G_\varepsilon(\bL_{(k-1)h},\bxi_k) = -h \nabla f(\bL_{(k-1)h}) + \sqrt{2h}\, \bxi_{k}$
with $\bxi_k \stackrel{\mathscr D}{=} h^{-1/2}(\bW_{kh}-\bW_{(k-1)h})$ being
a $p$-dimension standard Gaussian vector. The resulting approximate sampling
method is often called Langevin Monte Carlo (LMC) or Unadjusted Langevin Algorithm (ULA).
Its update rule follows from \eqref{langevin}
by replacing the function $t\mapsto \nabla f(\bL_t)$ by its piecewise constant
approximation. Therefore, the behavior of the LMC is governed by the following two
characteristics of the continuous-time process: the mixing rate and the smoothness
of the sample paths. A quantitative bound on the mixing rate allows us to choose a
time horizon $T$ such that the distribution of the random vector $\bL_T$ is within
a distance $\varepsilon/2$ of the target distribution, whereas the smoothness of sample
paths helps us to design a step-size $h$ so that the distribution of the discretized
process at $K=T/h$ is within a distance $\varepsilon/2$ of the distribution of $\bL_T$.
For the LMC, we know that the Langevin diffusion mixes exponentially fast with the
precise rate $e^{-mt}$. In addition, almost all sample paths of $\bL$ are H\"older
continuous of degree $\alpha$, for every $\alpha<1/2$. Combining these properties,
it has been shown that it suffices $K_\varepsilon = O((p/\varepsilon^2)\log(p/\varepsilon^2))$
iterations for the LMC algorithm to achieve an error smaller than $\varepsilon$
(both in total-variation and Wasserstein distances); see \citep{Dal14} for the first nonasymptotic result of this type and \citep{Durmus2,durmus2017,DalKar17} for improved versions of it.

Under the same assumptions on the log-target $f$, one can consider the kinetic
Langevin diffusion, also known as the second-order Langevin process, defined by
\begin{align}\label{kinetic}
    d
		\begin{bmatrix}
		\bV_t\\
		\bL_t
		\end{bmatrix}
		&=
		\begin{bmatrix}
		-(\gamma \bV_t + u\nabla f(\bL_t))\\
		\bV_t
		\end{bmatrix}
		\,dt+\sqrt{2\gamma u}
		\begin{bmatrix}
		\bfI_p\\
		\mathbf 0_{p\times p}
		\end{bmatrix}
		\,d\bW_t,\qquad t\ge 0,
\end{align}
where $\gamma>0$ is the friction coefficient and $u>0$ is the inverse mass.
As proved in \citep[Theorem 10.1]{Nelson}, the highly overdamped Langevin
diffusion  \eqref{langevin} is obtained as a limit of the rescaled
kinetic diffusion $\bar \bL_t = \bL_{\gamma t}$, where
$\bL$ is defined as in \eqref{kinetic} with $u=1$,
when the friction coefficient $\gamma$  tends to infinity.

The continuous-time Markov process $(\bL_t,\bV_t)$
is positive recurrent and its invariant distribution is absolutely continuous with respect to the Lebesgue measure on $\RR^{2p}$. The corresponding invariant density is given by
\begin{align}
p_{*}(\btheta,\bv)\propto\exp\Big\{-f(\btheta) - \frac1{2u}\|\bv\|_2^2\Big\}
,\qquad \btheta\in\RR^p,\ \bv\in\RR^p.
\end{align}
This means that under the invariant distribution, the components $\bL$ and $\bV$ are independent, $\bL$  is
distributed according to the target $\pi$, whereas $\bV/\sqrt{u}$ is a standard Gaussian vector. Therefore, one can use this process for solving the problem of sampling
from $\pi$. As discussed above, the quality of the resulting sampler will depend on two key properties of the process: rate of mixing and smoothness of sample paths. The rate
of mixing of  kinetic diffusions has been recently studied by
\cite{Eberle} under conditions that are more general than strong convexity of $f$. In strongly convex case, a more tractable result has been obtained by
\cite{Cheng2}. It establishes that for $\gamma = 2$ and $u=1/M$, the mixing rate in the Wasserstein distance is $e^{-(m/2M)t}$; see Theorem 5 in \citep{Cheng2}. On the other hand, sample paths of
the process $\{\bL\}$ defined in \eqref{kinetic} are smooth of order $1+\alpha$, for every $\alpha\in[0,1/2[$. Combining these two properties, \citep{Cheng2} prove that a suitable discretization of \eqref{kinetic} leads to a sampler that achieves an error smaller than $\varepsilon$ in a number of iterations $K$ satisfying $K = O((p/\varepsilon^2)^{1/2}\log(p/\varepsilon))$.

It follows from the discussion of previous paragraphs that the kinetic LMC based on \eqref{kinetic} converges faster than the standard LMC based on \eqref{langevin}. Furthermore, this improved rate of convergence is mainly due to the higher smoothness of sample paths of the underlying Markov process. The main purpose of the present work is to
pursue the investigation of the kinetic Langevin Monte Carlo (KLMC) initiated in
\citep{Cheng2} by addressing the following questions:
\vspace{-10pt}
\begin{itemize}\itemsep=1pt
\item[\bf Q1.] What is the rate of mixing of the continuous-time kinetic
Langevin diffusion for general values of the parameters $u$ and $\gamma$?
\item[\bf Q2.] Is it possible to improve the rate of convergence of the KLMC by
optimizing it over the choice of $u$, $\gamma$ and the step-size ?
\item[\bf Q3.] If the function $f$ happens to have a Lipschitz-continuous Hessian, is it
possible to devise a discretization that takes advantage of this additional
smoothness and leads to improved rates of convergence?
\end{itemize}
\vspace{-10pt}
The rest of the paper is devoted to answering these questions. The rate of mixing
for the  continuous-time process is discussed in \Cref{sec:contraction}.  In a
nutshell, we show that if $\gamma\ge \sqrt{(M+m)u}$, then the rate of mixing is of order
$e^{-(um/\gamma)t}$. Non-asymptotic guarantees for the KLMC algorithm are stated and
discussed in \Cref{sec:KLMC}. They are in the same spirit as those established in
\citep{Cheng2}, but have an improved dependence on the condition number, the ratio of
the Lipschitz constant $M$ and the strong convexity constant $m$. Our result has also
improved constants and is much less sensitive to the choice of the initial distribution.
These improvements are achieved thanks to a more careful analysis of the discretization
error of the Langevin process. Finally, we present in \Cref{sec:2ndorderKLMC} a new
discretization, termed second-order KLMC, of the kinetic Langevin diffusion that exploits
the knowledge of the Hessian of $f$. Its error, measured in the Wasserstein distance
$W_2$ is shown  to be bounded by $\varepsilon$ for a number of iterations that scales
as $(p/\varepsilon)^{1/2}$. Thus, we get an improvement of order $(1/\varepsilon)^{1/2}$
over the first-order KLMC algorithm.

\section{Mixing rate of the kinetic Langevin diffusion} \label{sec:contraction}

Let us denote by $\bfP_t^{\bL}$ the transition probability at time $t$ of the kinetic
diffusion $\bL$ defined by \eqref{kinetic}. This means that $\bfP_t^{\bL}$ is a
Markov kernel given by $\bfP_t^{\bL}((\bx,\bv), B) = \bfP(\bL_t\in B|\bV_0=\bv,\bL_0 =
\bx)$, for every
$\bv,\bx\in\RR^p$ and any Borel set $B\subset\RR^p$. For any probability distribution $\mu$
on $\RR^p\times\RR^p$, we denote $\mu\bfP_t^{\bL}$ the (unconditional) distribution of the
random variable  $\bL_t$ when the starting distribution of the process $(\bV,\bL)$ is
$\mu$ (\textit{i.e.}, when $(\bV,\bL_0)\sim \mu$).

Since the process $(\bV,\bL)$ is ergodic, whatever the initial distribution, for large
values of $t$ the distribution of $\bL_t$ is close to the invariant distribution. We want
to quantify how fast does this convergence occur. Furthermore, we are interested in
a nonasymptotic result in the Wasserstein-Kantorovich distance $W_2$, valid for a large
set of possible values $(\gamma,u)$.

A first observation is that, without loss of generality, we can focus our attention to
the case $u=1$. This is made formal in the next lemma.

\begin{lemma}\label{lem:1}
Let $(\bV,\bL)$ be the kinetic  Langevin diffusion defined by \eqref{kinetic}.
The modified process $(\bar\bV_t,\bar\bL_t) = (u^{-1/2}\bV_{t/\sqrt{u}},\bL_{t/\sqrt{u}})$
is an kinetic Langevin diffusion as well with associated parameters $\bar\gamma =
\gamma/\sqrt{u}$  and $\bar u=1$.
\end{lemma}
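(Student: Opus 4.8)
The proof is a direct verification based on a deterministic time change together with the scaling invariance of Brownian motion. The plan is to start from the integral form of \eqref{kinetic}, i.e., $\bL_t = \bL_0 + \int_0^t \bV_r\,dr$ and $\bV_t = \bV_0 - \int_0^t(\gamma\bV_r + u\nabla f(\bL_r))\,dr + \sqrt{2\gamma u}\,\bW_t$, to substitute $t\mapsto t/\sqrt u$ throughout, and to perform the change of variable $r = \rho/\sqrt u$ in the Lebesgue integrals. For the position component this gives, for $\bar\bL_t = \bL_{t/\sqrt u}$, that $\bar\bL_t = \bL_0 + \int_0^{t/\sqrt u}\bV_r\,dr = \bar\bL_0 + \int_0^t u^{-1/2}\bV_{\rho/\sqrt u}\,d\rho = \bar\bL_0 + \int_0^t \bar\bV_\rho\,d\rho$; hence $d\bar\bL_t = \bar\bV_t\,dt$, which matches the second line of \eqref{kinetic} with inverse mass $\bar u = 1$.

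For the velocity component, the same change of variable turns the drift integral of $\bV_{t/\sqrt u}$ into $\int_0^t\bigl(\gamma u^{-1/2}\bV_{\rho/\sqrt u} + \sqrt u\,\nabla f(\bar\bL_\rho)\bigr)\,d\rho$. The key point is the martingale term: by the scaling property of Brownian motion, $\bar\bW_t := u^{1/4}\bW_{t/\sqrt u}$ is again a $p$-dimensional standard Brownian motion (its covariance at time $t$ equals $u^{1/2}\cdot(t/\sqrt u)\,\bfI_p = t\,\bfI_p$) with respect to the time-changed filtration $\bar{\mathcal F}_t = \mathcal F_{t/\sqrt u}$, so that $\sqrt{2\gamma u}\,\bW_{t/\sqrt u} = \sqrt{2\gamma\sqrt u}\,\bar\bW_t$. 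Collecting the terms, multiplying throughout by $u^{-1/2}$, and using $\gamma u^{-1/2} = \bar\gamma$ and $u^{-1/2}\sqrt{2\gamma\sqrt u} = \sqrt{2\bar\gamma}$, one obtains $\bar\bV_t = \bar\bV_0 - \int_0^t\bigl(\bar\gamma\bar\bV_\rho + \nabla f(\bar\bL_\rho)\bigr)\,d\rho + \sqrt{2\bar\gamma}\,\bar\bW_t$ with $\bar\gamma = \gamma/\sqrt u$ and $\bar\bV_0 = u^{-1/2}\bV_0$. Together with the identity for $\bar\bL_t$, this is exactly \eqref{kinetic} driven by $\bar\bW$ with parameters $(\bar\gamma,\bar u) = (\gamma/\sqrt u,\,1)$, which is the claim.

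There is no genuinely hard step here; the only point deserving a little care is that the rescaled pair is again a \emph{strong} solution, i.e. that $\bar\bW$ is a Brownian motion with respect to a filtration to which $(\bar\bV,\bar\bL)$ is adapted. This holds because $t\mapsto t/\sqrt u$ is an increasing bijection of $[0,\infty)$, so $(\bar{\mathcal F}_t)_{t\ge 0}$ is again a filtration, $\bar\bW$ remains a Brownian motion with respect to it, and pathwise uniqueness for \eqref{kinetic} is transported without modification along the change of variables.
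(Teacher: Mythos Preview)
Your proof is correct; the computation is carried out carefully and the treatment of the Brownian scaling $\bar\bW_t = u^{1/4}\bW_{t/\sqrt u}$ is exactly what is needed. The paper omits the proof entirely as ``straightforward,'' so there is nothing to compare against---your direct verification is the natural argument.
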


The proof of this result is straightforward and therefore is omitted.
Note that it shows that the parameter $u$ merely represents a time scale (the speed of
running over the path of the process $\bL$).  Therefore, in the rest of this paper,
we will consider the parameter $u$ to be equal to 1.

\begin{theorem}\label{th:1}
Assume that the function $f$ is twice differentiable with a Hessian matrix
$\nabla^2f$ satisfying $m\bfI_p\preceq \nabla^2f(\bx)\preceq M\bfI_p$ for every
$\bx\in\RR^p$.
Let $\mu_1,\mu_2$ and $\mu_2'$ be three probability measures on $\RR^p$. Let
us define the product measures $\mu = \mu_1\otimes\mu_2$ and
$\mu' = \mu_1\otimes\mu_2'$. For every $\gamma,t>0$, there exist numbers
$\alpha\le \sqrt{2}/\gamma$ and $\beta\ge \{m\wedge (\gamma^2-M) \}/\gamma$
such that
\begin{align}\label{exp1}
W_2(\mu\bfP_t^{\bL},\mu'\bfP_t^{\bL})
	&\le \alpha
	e^{-\beta\,t}
	W_2(\mu,\mu').
\end{align}
More precisely, for every $v\in [0,\gamma/2[$, we have\footnote{One can observe that \eqref{exp1} can be deduced from
\eqref{exp2} by taking $v=0$.}
\begin{align}\label{exp2}
W_2(\mu\bfP_t^{\bL},\mu'\bfP_t^{\bL})
	&\le \frac{\sqrt{2((\gamma-v)^2+v^2)}}{\gamma-2v}
	\exp\bigg\{\frac{(v^2- m)\vee(M-(\gamma-v)^2)}{\gamma-2v}\,t\bigg\}
	W_2(\mu,\mu').
\end{align}
\end{theorem}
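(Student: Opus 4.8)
The plan is to exhibit an explicit synchronous (reflection-free) coupling of the two diffusions started from $\mu$ and $\mu'$, and then control a carefully chosen quadratic Lyapunov functional of the difference process.

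Concretely, first I would couple $(\bV_t, \bL_t)$ and $(\bV'_t, \bL'_t)$ by driving them with the \emph{same} Brownian motion $\bW$, so that the difference $(\bv_t, \bx_t) := (\bV_t - \bV'_t, \bL_t - \bL'_t)$ satisfies the random linear ODE
\begin{align*}
\dot\bx_t &= \bv_t, \\
\dot\bv_t &= -\gamma \bv_t - \bQ_t \bx_t,
\end{align*}
where $\bQ_t := \int_0^1 \nabla^2 f\big(\bL'_t + s(\bL_t - \bL'_t)\big)\, ds$ is symmetric with $m\bfI_p \preceq \bQ_t \preceq M\bfI_p$ (this uses the Hessian bound hypothesis and the fundamental theorem of calculus). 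The stochastic term has cancelled, so this is a deterministic-coefficient linear system with a time-varying symmetric matrix obeying uniform spectral bounds. Since both initial conditions can be chosen to realize $W_2(\mu,\mu')$ after coupling the product measures coordinatewise — the product structure $\mu = \mu_1 \otimes \mu_2$ is what makes $(\bv_0, \bx_0)$ controllable — it suffices to show that along this ODE the appropriate norm of $(\bv_t, \bx_t)$ decays, and then take expectations.

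The second step is to find the right quadratic form. For the shifted friction parameter $v \in [0, \gamma/2[$, I would look at the twisted velocity $\bv_t + v\bx_t$ and consider a functional of the shape
\begin{align*}
\mathcal E_t = \|\bv_t + v\bx_t\|_2^2 + \text{(cross term)} + \text{(coefficient)}\cdot \|\bx_t\|_2^2,
\end{align*}
i.e. a Lyapunov function $\tfrac12 (\bv_t, \bx_t)^\top \bfM (\bv_t, \bx_t)$ for a constant positive-definite $\bfM$ (built from $v$), differentiate along the trajectory, and substitute $\bQ_t$ wherever $\nabla^2 f$ would appear. The aim is to bound $\tfrac{d}{dt}\mathcal E_t \le -2\rho\, \mathcal E_t$ with $\rho = \tfrac{m - v^2}{\gamma - 2v} \wedge \tfrac{(\gamma-v)^2 - M}{\gamma - 2v}$, by arguing that after completing the square the remaining terms are controlled by the extreme eigenvalues of $\bQ_t$, so the worst case is $\bQ_t = m\bfI_p$ (governing the $\|\bx\|^2$ direction) or $\bQ_t = M\bfI_p$ (governing the twisted-velocity direction). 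Having $\mathcal E_t \le e^{-2\rho t}\mathcal E_0$, I would pass back to the Euclidean norm by comparing $\mathcal E$ with $\|\bv\|^2 + \|\bx\|^2$ from both sides; the ratio of the square roots of the condition numbers of $\bfM$ produces the prefactor, which a direct computation should match to $\tfrac{\sqrt{2((\gamma-v)^2 + v^2)}}{\gamma - 2v}$. Finally, taking $\mathbb E^{1/2}$ over the coupling and minimizing over couplings of $\mu,\mu'$ on the right-hand side gives \eqref{exp2}, and setting $v = 0$ recovers \eqref{exp1} with $\alpha = \sqrt2/\gamma$ and $\beta = \tfrac{m \wedge(\gamma^2 - M)}{\gamma}$.

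The main obstacle is the choice of the matrix $\bfM$ (equivalently, the cross term and the $\|\bx\|^2$ coefficient) so that the differential inequality holds \emph{uniformly} over all admissible $\bQ_t$ with the stated sharp rate: one has to split the ``energy budget'' of the dissipation $-2\gamma\|\bv_t + v\bx_t\|^2$-type term between damping the $\bx$-direction and the twisted-velocity direction in exactly the proportion that makes both the $m$-constraint and the $M$-constraint active at the claimed rate, and then verify that $\bfM \succ 0$ precisely when $v < \gamma/2$. A secondary (purely bookkeeping) difficulty is tracking the prefactor constant through the norm comparison, since an inefficient bound there would spoil the clean $\sqrt2$ factor; a diagonalization of the $2\times 2$ block form of $\bfM$ keeps this manageable.
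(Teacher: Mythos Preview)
Your approach is essentially the paper's: synchronous coupling, mean-value Hessian $\bQ_t$, and a quadratic Lyapunov functional for the difference ODE. The paper makes the Lyapunov choice completely explicit by the linear change of coordinates $\psi_t = \bv_t + (\gamma-v)\bx_t$, $z_t = -(\bv_t + v\bx_t)$ and takes $\mathcal E_t = \|\psi_t\|_2^2 + \|z_t\|_2^2$; the point of this particular pair (rather than a single twisted velocity plus a cross term and a pure $\|\bx\|^2$) is that the $\bQ_t$-dependent cross term $\psi_t^\top(\cdot)z_t$ coming from $\dot\psi_t$ is \emph{exactly} cancelled by the one coming from $\dot z_t$, leaving only $\psi_t^\top(v^2\bfI-\bQ_t)\psi_t + z_t^\top(\bQ_t-(\gamma-v)^2\bfI)z_t$, to which the spectral bounds on $\bQ_t$ apply directly. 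Your ``complete the square and let the worst case be $m\bfI$ or $M\bfI$'' heuristic only goes through if that cancellation is arranged first, so this is the concrete content hiding behind the obstacle you flag.

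Where your plan does not quite land is the prefactor. You propose to sandwich $\mathcal E$ between $\lambda_{\min}(\bfM)$ and $\lambda_{\max}(\bfM)$ times $\|\bv\|^2+\|\bx\|^2$ and read off $\sqrt{\kappa(\bfM)}$. This yields the correct rate but a strictly larger constant than the one in \eqref{exp2}; e.g.\ for $v=0$, $\gamma=1$ the condition-number bound gives $(3+\sqrt5)/2\approx 2.618$ while the theorem claims $\sqrt2$. The paper instead uses two \emph{one-sided} bounds that exploit the product structure $\mu=\mu_1\otimes\mu_2$: since $\bv_0=0$ one has $\mathcal E_0 = ((\gamma-v)^2+v^2)\|\bx_0\|_2^2$ exactly (this is the $(2,2)$ block of $\bfM$, not $\lambda_{\max}$), and on the other end $\bx_t=(\psi_t+z_t)/(\gamma-2v)$ so $\|\bx_t\|_2\le \sqrt{2}\,\mathcal E_t^{1/2}/(\gamma-2v)$. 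Multiplying these two one-sided constants gives $\sqrt{2((\gamma-v)^2+v^2)}/(\gamma-2v)$ on the nose.
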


The proof of this result is postponed to \Cref{sec:proof1}. Here, we will discuss
some consequences of it and present the main ingredient of the proof. First of all, note that
this result implies that for $\gamma^2 > 2\vee M$, the operator $\bfP_t^{\bL}$ is a
contraction. The rate of this contraction is characterized by the parameter $\beta$.
If we optimize the exponent in \eqref{exp2} with respect to $v$, we get the optimal
rates of contraction reported in \Cref{tab:1}.

If we consider the case $\gamma = 2\sqrt{Mu}= 2\sqrt{M}$ previously studied in \citep{Cheng2},
then the best rate of contraction provided by \eqref{exp2} corresponds to $v = \sqrt{M}-\sqrt{M-m}$,
and the upper bound of \Cref{th:1} reads as
\begin{align}\label{contr:1}
W_2(\mu\bfP_t^{\bL},\mu'\bfP_t^{\bL})
	&\le \bigg(\frac{2M-m}{M-m}\bigg)^{1/2}\exp\big\{-\big(\sqrt{M}- \sqrt{M-m}\big)\,t\big\}W_2(\mu,\mu').
\end{align}
One can check that the constant $\sqrt{M}- \sqrt{M-m}$ that we
obtain within the exponential is optimal, in the sense that one gets exactly this
constant in the case where $f$ is the bivariate quadratic function
$f(x_1,x_2) = (m/2)x_1^2+(M/2)x_2^2$. This constant is slightly better than
the one obtained in \citep[Lemma 8]{Cheng2} for the particular choice of the time scale
$u=1/M$. Indeed, if we rewrite the two results in the common time-scale $u=1$,
\citep[Lemma 8]{Cheng2} provides the contraction rate $\beta =m/(2\sqrt{M} )$, which
is smaller than (but asymptotically equivalent to)
$\sqrt{M}- \sqrt{M-m}$.

Another relevant consequence is obtained by instantiating \eqref{exp1} to
the case $\gamma \ge \sqrt{M+m}$. This leads to the bound
\begin{align}\label{contr:2}
\gamma \ge \sqrt{M+m}\quad
\Longrightarrow
\quad
W_2(\mu\bfP_t^{\bL},\mu'\bfP_t^{\bL})
	&\le \sqrt{2}\exp\big\{-({{m}/{\gamma})}
	\,t\big\}W_2(\mu,\mu').
\end{align}
This result is interesting since it allows to optimize the argument of the exponent
with respect to $\gamma$ for  fixed $t$. The corresponding optimized constant
is $m/\sqrt{M+m}$, which improves on the constant obtained in \eqref{contr:1} for
$\gamma = 2\sqrt{M}$. When $M/m$ becomes large, the improvement
factor gets close to 2.

\begin{table}[ht]
\begin{tabular}{c|c|c|c|c}
\toprule
$\gamma^2\in$ & $]0,M]$ & $]M, m+M]$ &$[m+M,3m+M[$ & $[3m+M,+\infty[$\\
\midrule
rate of contraction, $\beta$ &  NA & $\ds\frac{\gamma^2-M}{\gamma}$
& $\ds\frac{\gamma}{2} -\frac{M-m}{2\sqrt{2(m+M)-\gamma^2}}$ &
$\ds\frac{\gamma-\sqrt{\gamma^2-4m}}{2}$\\
\midrule
Obtained by Thm.~\ref{th:1}  with & - & $v=0$ & $v= \frac{\gamma-\sqrt{2(m+M)-\gamma^2}}{2}$
& $v=\frac{\gamma-\sqrt{\gamma^2-4m}}{2}$ \\
\bottomrule
\end{tabular}
\caption{The rates of contraction of the distribution of the kinetic Langevin
diffusion $\bL_t$ for $u=1$ and varying $\gamma$. The reported values are obtained by
optimizing the bound in \Cref{th:1} with respect to $v$. In the overdamped case
$\gamma^2\ge 3m+M$, the obtained rates coincide with those that can be
directly computed for quadratic functions $f$ and, therefore, are optimal.}
\label{tab:1}
\end{table}

We now describe the main steps of the proof of \Cref{th:1}.
The main idea is to consider along with the process $(\bV,\bL)$, another process
$(\bV',\bL')$ that satisfies the same SDE
\eqref{kinetic} as $(\bV,\bL)$, with the same Brownian motion but with
different initial conditions. One easily checks that
\begin{align}\label{ODE}
    d
		\begin{bmatrix}
		\bV_t-\bV'_t\\
		\bL_t-\bL'_t
		\end{bmatrix}
		&=
		\begin{bmatrix}
		-(\gamma (\bV_t-\bV'_t) + \nabla f(\bL_t)-\nabla f(\bL'_t))\\
		\bV_t-\bV'_t
		\end{bmatrix}
		\,dt\qquad t\ge 0.
\end{align}
Using the mean value theorem, we infer that for a suitable symmetric matrix
$\bfH_t$, we have $\nabla f(\bL_t)-\nabla f(\bL'_t) = \bfH_t(\bL_t-\bL'_t)$. Furthermore,
$\bfH_t$ being the Hessian of a strongly convex function satisfies $\bfH_t\succeq m\bfI_p$.
Then, \eqref{ODE} can be rewritten as
\begin{align}\label{ODE2}
    \frac{d}{dt}
		\begin{bmatrix}
		\bV_t-\bV'_t\\
		\bL_t-\bL'_t
		\end{bmatrix}
		&=
		\begin{bmatrix}
		-\gamma\bfI_p  & -\bfH_t \\
		\bfI_p & \mathbf 0_{p\times p}
		\end{bmatrix}
				\begin{bmatrix}
		\bV_t-\bV'_t\\
		\bL_t-\bL'_t
		\end{bmatrix}
		\qquad t\ge 0.
\end{align}
In a small neighborhood of any fixed time instance $t_0$, \eqref{ODE2} is close to a
linear differential equation with the associated matrix
$$
\bfM(t_0) = 		\begin{bmatrix}
		-\gamma\bfI_p  & -\bfH_{t_0} \\
		\bfI_p & \mathbf 0_{p\times p}
		\end{bmatrix}.
$$
It is well-known that the solution of such a differential equation will tend to zero
if and only if the real parts of all the eigenvalues of $\bfM(t_0)$ are negative.
The matrix $\bfM(t_0)$ is not symmetric; it is in most cases diagonalizable but
its eigenvectors generally depend on $t_0$. To circumvent this difficulty, we
determine the transformations diagonalizing the surrogate matrix
$$
\bfM = 		\begin{bmatrix}
		-\gamma\bfI_p  & -v^2\bfI_p \\
		\bfI_p & \mathbf 0_{p\times p}
		\end{bmatrix}, \qquad\text{for some $v\in[0,\gamma/2[$}.
$$
This yields an invertible  matrix $\bfP$ such that $\bfP^{-1}\bfM\bfP$ is diagonal.
We can thus rewrite \eqref{ODE2} in the form
\begin{align}\label{ODE3}
    \frac{d}{dt}
		\bfP^{-1}\begin{bmatrix}
		\bV_t-\bV'_t\\
		\bL_t-\bL'_t
		\end{bmatrix}
		&= \{\bfP^{-1}\bfM(t)\bfP\}\bfP^{-1}
		\begin{bmatrix}
		\bV_t-\bV'_t\\
		\bL_t-\bL'_t
		\end{bmatrix}
		\qquad t\ge 0.
\end{align}
Interestingly, we prove that the quadratic form associated with
the  matrix $\bfP^{-1}\bfM(t)\bfP$ is negative definite and this provides the desired
result. Furthermore, we use this same matrix $\bfP$ for analyzing the discretized
version of the kinetic Langevin diffusion and proving the main result of the next section.

\section{Error bound for the KLMC in Wasserstein distance} \label{sec:KLMC}

Let us start this section by recalling the KLMC algorithm, the sampler
derived from  a suitable time-discretization of the kinetic diffusion,
introduced by \cite{Cheng2}. Let us define the sequence of functions $\psi_k$ by
$\psi_0(t) = e^{-\gamma t}$ and $\psi_{k+1} (t)= \int_0^t \psi_k(s)\,ds$.
Recall that $f$ is assumed twice differentiable and,
without loss of generality, the parameter $u$ is assumed to be equal to one.
The discretization involves a step-size $h>0$ and is defined by the following
recursion:
\begin{align}\label{KLMC}
\begin{bmatrix}
	\bv_{k+1}\\[4pt]
	\bvartheta_{k+1}
\end{bmatrix}
&=
\begin{bmatrix}
	\psi_0(h)\bv_k-\psi_1(h)\nabla f(\bvartheta_k)\\[4pt]
	\bvartheta_{k} + \psi_1(h)\bv_k -\psi_2(h)\nabla f(\bvartheta_k)
\end{bmatrix}
+ \sqrt{2\gamma }
\begin{bmatrix}
	\bxi_{k+1}\\[4pt]
	\bxi_{k+1}'
\end{bmatrix},	
\end{align}
where $(\bxi_{k+1},\bxi'_{k+1})$ is a $2p$-dimensional centered Gaussian vector
satisfying the following conditions:
\vspace{-18pt}

\begin{itemize}\itemsep=0pt
\item $(\bxi_j,\bxi'_j)$'s are iid and independent of the initial condition $(\bv_0,\bvartheta_0)$,
\item for any fixed $j$, the random vectors $\big((\bxi_{j})_1,(\bxi'_j)_1\big)$,
$\big((\bxi_{j})_2,(\bxi'_j)_2\big)$, $\ldots$, $\big((\bxi_{j})_p,(\bxi'_j)_p\big)$
are iid with the covariance matrix
$$
\mathbf C = \int_0^h [\psi_0(t) \ \psi_1(t)]^\top [\psi_0(t) \ \psi_1(t)]\,
dt.
$$
\end{itemize}
\vspace{-8pt}

This recursion may appear surprizing, but one can check that it is obtained
by first replacing in \eqref{kinetic}, on each time interval $t\in[kh,(k+1)h]$,
the gradient $\nabla f(\bL_t)$ by $\nabla f(\bL_{kh})$, by renaming $(\bV_{kh},\bL_{kh})$
into $(\bv_k,\bvartheta_k)$ and by explicitly solving the obtained linear SDE (which leads to an 
Ornstein-Uhlenbeck process). To the best of our knowledge, the algorithm \eqref{KLMC}, that 
we will refer to as KLMC, has been first proposed by \cite{Cheng2}. The next result 
characterizes its approximation properties.

\begin{theorem}\label{th:3}
Assume that the function $f$ is twice differentiable with a Hessian matrix
$\nabla^2f$ satisfying $m\bfI_p\preceq \nabla^2f(\bx)\preceq M\bfI_p$ for every
$\bx\in\RR^p$.
In addition, let the initial condition of the KLMC algorithm be drawn from the product
distribution $\mu = \mathcal N(\mathbf 0_p,\bfI_p)\otimes \nu_0$.  For every $\gamma\ge
\sqrt{m+M}$ and $h\le m/(4\gamma M)$, the distribution $\nu_k$ of the $k$th iterate
$\bvartheta_k$ of the KLMC algorithm \eqref{KLMC} satisfies
\begin{align}
W_2(\nu_k,\pi)
&\le \sqrt{2}\Big(1-\frac{0.75mh}{\gamma}\Big)^kW_2(\nu_0,\pi) +
\frac{Mh\sqrt{2p}}{m}.
\end{align}
\end{theorem}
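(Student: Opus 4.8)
The plan is to decompose the sampling error into a "mixing" term and a "discretization" term by introducing a coupling between the KLMC chain and the continuous-time kinetic Langevin diffusion run in stationarity. Concretely, I would run alongside the KLMC iterates $(\bv_k,\bvartheta_k)$ a synchronously coupled continuous-time process $(\bV_t,\bL_t)$ solving \eqref{kinetic} with $u=1$, driven by the same Brownian increments (the Gaussian vectors $(\bxi_{k+1},\bxi'_{k+1})$ are exactly the discretized increments of that Brownian motion over $[kh,(k+1)h]$), and started from the invariant distribution $p_*$ so that $\bL_{kh}\sim\pi$ for every $k$. The key elementary observation is that the KLMC update \eqref{KLMC} is precisely the exact solution of \eqref{kinetic} over $[kh,(k+1)h]$ with the drift term $\nabla f(\bL_t)$ \emph{frozen} at its value at the left endpoint; therefore on each step the difference between the two processes is governed by the same linear ODE as in \eqref{ODE2}–\eqref{ODE3}, but with an additional forcing term equal to $\nabla f(\bL_t)-\nabla f(\bL_{kh})$ integrated against the kernels $\psi_0,\psi_1$.

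The main step is a one-step contraction-plus-drift inequality of the form
\begin{align}\label{onestep-plan}
W_2(\nu_{k+1},\pi)\le\Big(1-\frac{0.75mh}{\gamma}\Big)W_2(\nu_k,\pi)+\text{(per-step discretization error)}.
\end{align}
To get the contraction factor I would use exactly the change of variables by the matrix $\bfP$ from \Cref{th:1} (with $u=1$, $\gamma\ge\sqrt{m+M}$, and the choice $v=0$ that yields the constant $m/\gamma$): in the $\bfP^{-1}$-coordinates the homogeneous part of the flow over a time interval of length $h$ contracts the relevant quadratic form at rate $e^{-(m/\gamma)h}\le 1-(m/\gamma)h+\tfrac12(m/\gamma)^2h^2$, and the condition $h\le m/(4\gamma M)\le 1/(4\gamma^2)\cdot(m/M)\cdot\cdots$ (more precisely $mh/\gamma$ small) lets me absorb the quadratic remainder to replace $1-mh/\gamma$ by $1-0.75mh/\gamma$. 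The $\sqrt2$ prefactor comes from the condition number $\|\bfP\|\,\|\bfP^{-1}\|$ of that same transformation, exactly as the $\sqrt2$ in \eqref{contr:2}. I would apply this homogeneous bound to the coupled-at-stationarity pair $(\bv_k,\bvartheta_k)$ versus $(\bV_{kh},\bL_{kh})$ after optimally coupling the initial laws $\nu_k$ and $\pi$ (extended to the velocity coordinate via the common $\mathcal N(0,\bfI_p)$ marginal, which stays invariant along both dynamics).

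For the discretization error I would bound, in the $\bfP$-weighted norm, the contribution of the frozen-drift forcing term over one step: this is $\int_{kh}^{(k+1)h}$ of a kernel bounded by $\psi_1(h-s)\le h$ times $\|\nabla f(\bL_t)-\nabla f(\bL_{kh})\|\le M\|\bL_t-\bL_{kh}\|$, and then $\|\bL_t-\bL_{kh}\|$ over an interval of length $\le h$ is controlled by the velocity plus a Brownian fluctuation, giving $\EE\|\bL_t-\bL_{kh}\|^2\lesssim h^2\EE\|\bV\|^2+\gamma h\cdot p=O(h^2 p+\gamma h p)$; using that in stationarity $\EE\|\bV_{kh}\|^2=p$ and $h\le m/(4\gamma M)$ the per-step error works out to $O(Mh^2\sqrt p\,)$ in $W_2$, whose geometric sum against the contraction factor $1-0.75mh/\gamma$ telescopes to $\dfrac{Mh^2\sqrt p\,\cdot\text{const}}{mh/\gamma}\asymp\dfrac{\gamma Mh\sqrt p}{m}$ — and after tracking constants carefully and using $\gamma\le\text{const}\sqrt M$ in the regime of interest one lands on the stated $Mh\sqrt{2p}/m$. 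Iterating \eqref{onestep-plan} from $k=0$ and bounding $\sum_{j\ge0}(1-0.75mh/\gamma)^j\le \gamma/(0.75mh)$ then yields the claimed bound.

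The hard part will be getting the constants to come out as sharply as $0.75$, $\sqrt2$, and $Mh\sqrt{2p}/m$ rather than unspecified universal constants: this requires (i) a precise, not merely asymptotic, estimate of the second-order term in $e^{-(m/\gamma)h}$ and its absorption under $h\le m/(4\gamma M)$, (ii) a careful computation of the operator norm of $\bfP$ and $\bfP^{-1}$ for the choice $v=0$ so that the prefactor is exactly $\sqrt2$ rather than something larger, and (iii) an honest moment bound $\EE\|\bL_t-\bL_{kh}\|_2^2$ with explicit constants, keeping the cross term between the velocity drift and the Brownian part under control. The structural part — freezing the drift, reusing the $\bfP$ of \Cref{th:1}, optimal coupling, and telescoping — is routine; the bookkeeping of constants is where the real work lies, and it is presumably where the paper's claimed improvement over \citep{Cheng2} in the condition-number dependence and in the sensitivity to $\nu_0$ (note the clean $\sqrt2\,W_2(\nu_0,\pi)$, with no additive dependence on moments of $\nu_0$) is won.
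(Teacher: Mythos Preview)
Your overall architecture---synchronous coupling with a stationary kinetic Langevin process, the change of variables by the matrix $\bfP$ from \Cref{th:1} with $v=0$, a contraction-plus-discretization recursion, and telescoping---matches the paper. But the execution contains a structural gap and several bookkeeping errors that would prevent the stated constants from coming out.

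\textbf{The main gap.} Your one-step inequality is written directly for $W_2(\nu_k,\pi)$, i.e.\ for the position marginal alone. This does not work: the per-step contraction $e^{-mh/\gamma}$ holds for the \emph{joint} $(\bV,\bL)$-difference in the $\bfP^{-1}$-weighted norm, not for the $\bL$-difference alone. If you convert from the weighted norm to the position norm at every step you pick up the factor $\sqrt{2}=\|[\mathbf 0\ \bfI_p]\bfP\|\cdot\gamma$ repeatedly, which blows up. The paper instead tracks the scalar quantity
\[
A_j \;=\; \Big\|\bfP^{-1}\begin{bmatrix}\tilde\bV_{jh}-\bV_{jh}\\ \tilde\bL_{jh}-\bL_{jh}\end{bmatrix}\Big\|_{\LL_2}
\]
through the entire recursion and only converts to $W_2$ at the very end, using $\|\tilde\bL_{kh}-\bL_{kh}\|_{\LL_2}\le \gamma^{-1}\sqrt{2}\,A_k$ and $A_0=\gamma\,W_2(\nu_0,\pi)$. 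This single terminal conversion is what produces the clean prefactor $\sqrt{2}$ on $W_2(\nu_0,\pi)$, and it is also why the $\gamma$ in the bias term cancels exactly (your attempt to remove it via ``$\gamma\le\text{const}\,\sqrt{M}$'' is neither needed nor correct, and would spoil the $\varkappa^{3/2}$ dependence). Relatedly, your remark that the velocity marginal $\mathcal N(\mathbf 0,\bfI_p)$ ``stays invariant along both dynamics'' is false for the KLMC chain; you cannot re-couple velocities at each step, which is another reason the recursion must be on $A_j$ from a single global coupling.

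\textbf{Secondary issues.} (i) The paper inserts, on each interval $[jh,(j+1)h]$, an auxiliary \emph{continuous} process $(\bV',\bL')$ starting at the discrete state $(\tilde\bV_{jh},\tilde\bL_{jh})$; this cleanly splits $A_{j+1}$ into a pure discretization term (between $(\tilde\bV,\tilde\bL)$ and $(\bV',\bL')$, both starting at the same point) and a pure contraction term $e^{-mh/\gamma}A_j$ (between two continuous diffusions). Your forcing term $\nabla f(\bL_t)-\nabla f(\bL_{kh})$ mixes the two regimes. (ii) Because $(\bV',\bL')$ is not stationary, its velocity is bounded via $\|\bV'_u\|_{\LL_2}\le \sqrt{p}+A_j$, so the discretization term is $0.75\,Mh^2(\sqrt{p}+A_j)$, not $O(Mh^2\sqrt{p})$; the extra $0.75\,Mh^2 A_j$ is what gets absorbed, together with the quadratic remainder of $e^{-mh/\gamma}$, under $h\le m/(4\gamma M)$ to turn $1-mh/\gamma$ into $1-0.75\,mh/\gamma$. (iii) There is no ``Brownian fluctuation'' in $\bL_t-\bL_{kh}$: the position component has no martingale part, $\bL'_s-\bL'_{jh}=\int_{jh}^s\bV'_u\,du$, so your $\gamma h p$ term should not appear and the per-step bound is exactly $\tfrac{Mh^2}{2}\max_u\|\bV'_u\|_{\LL_2}$ (for $\bV$) and $\tfrac{Mh^3}{6}\max_u\|\bV'_u\|_{\LL_2}$ (for $\bL$).
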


The proof of this theorem, postponed to \Cref{sec:proofKLMC}, is inspired by the proof
in \citep{Cheng2}, but with a better control of the discretization error.
This allows us to achieve the following improvements as compared to aforementioned paper:
\vspace{-10pt}
\begin{itemize}\itemsep=0pt
\item The second term in the upper bound provided by \Cref{th:3} scales linearly
as a function of the condition number $\varkappa\triangleq M/m$, whereas the corresponding term in \citep{Cheng2} scales as $\varkappa^{3/2}$.

\item The impact of the initial distribution $\nu_0$ on the overall error of sampling
appears only in the first term, which is multiplied by a sequence that has an exponential
decay in $k$.  As a consequence, if we denote by
$K$ the number of iterations sufficient for the error to be smaller than a prescribed
level $\varepsilon$, our result leads to an expression of $K$ in which $W_2(\nu_0,\pi)$
is within a logarithm. Recall that the expression of $K$ in \citep[Theorem 1]{Cheng2}
scales linearly in $W_2(\nu_0,\pi)$.

\item The numerical constants of \Cref{th:3} are much smaller than those
of the corresponding result in \citep{Cheng2}.
\end{itemize}

In order to ease the comparison of our result to \citep[Theorem 1]{Cheng2},
let us apply \Cref{th:3} to
\begin{align}\label{hKLMC}
h = \frac{m}{4M\sqrt{m+M}}\bigwedge \frac{0.94\varepsilon}{\varkappa\sqrt{2p}}
\end{align}
and  $\gamma = \sqrt{m+M}$, which corresponds to the tightest upper bound
furnished by our theorem. Note that in \citep{Cheng2} it is implicitly assumed that
$p/\varepsilon^2$ is large enough so that the second term in the minimum appearing
in \eqref{hKLMC} is smaller than the first term.
From \eqref{hKLMC} we obtain that\footnote{This value of $K$ is obtained by
choosing $h$ and $K$ so that the second term in the upper bound of \Cref{th:3} is
equal to $(1-\sqrt{2}/24)\epsilon$ whereas the first term is smaller than
$(\sqrt{2}/24)\epsilon$.}
\begin{align}\label{KKLMC}
K_{\rm KLMC}  &\ge \frac{\sqrt{m+M}}{0.75m} \bigg(\frac{4M\sqrt{m+M}}{m}\bigvee
\frac{\varkappa\sqrt{2p}}{0.94\varepsilon}\bigg) \log\bigg(\frac{24W_2(\nu_0,\pi)}{\varepsilon}\bigg)
\end{align}
iterations are sufficient for having $W_2(\nu_K,\pi)\le \varepsilon$. After some
simplifications, we get
\begin{align}
K_{\rm KLMC}  &\ge 3\varkappa^{3/2} \Big\{(16\varkappa)\bigvee
\frac{p}{m\varepsilon^2}\Big\}^{1/2} \log\bigg(\frac{24W_2(\nu_0,\pi)}{\varepsilon}\bigg)
\label{Keps}
\end{align}
Remind that
the corresponding result in \cite{Cheng2} requires $K$ to satisfy\footnote{This lower bound on $K$ is obtained by replacing
$\mathcal D^2\triangleq\|\btheta_0-\btheta^*\|_2$ by 0 in \citep[Theorem 1]{Cheng2}.}
\begin{align}
K\ge 52\varkappa^2\,\Big\{\frac{p}{m\varepsilon^2}\Big\}^{1/2} \log\bigg(\frac{24W_2(\nu_0,\pi)}{\varepsilon}\bigg).
\end{align}
Thus, the improvement in terms of the number of iterations
we obtain is at least by a factor $17\sqrt{\varkappa}$, whenever 
$\kappa\le p/(16m\varepsilon^2)$.

It is also helpful to compare the obtained result \eqref{Keps} to the
analogous result for the highly overdamped Langevin diffusion \citep{Durmus2}.
Using \citep[Eq. (22)]{Durmus4}, one can check that this is enough to choose an integer
\begin{align}
K_{\rm LMC} \ge
2\varkappa \Big\{ 1 \bigvee \frac{2.18 p}{m\varepsilon^2}\Big\}
\log\bigg(\frac{24W_2(\nu_0,\pi)}{\varepsilon}\bigg),
\label{Keps1}
\end{align}
such that $K_{\rm LMC}$ iterations of the LMC algorithm are sufficient to
arrive at an error bounded by $\varepsilon$. Comparing \eqref{Keps} and
\eqref{Keps1}, we see that the KLMC is preferable to the LMC when
$p/(m\varepsilon^2)$ is large as compared to the condition number $\varkappa$.
This is typically the case when the dimensionality is high or a high precision
approximation is required. The order of preference is reversed when
the condition number $\varkappa$ is large as compared to $p/(m\varepsilon^2)$.
Such  a situation corresponds to settings where the target log-density
$f$ is nearly flat ($m$ is small) or has a gradient that may increase very fast
($M$ is large). As an important conclusion, we can note that none of these
two methods is superior to the other in general. The plot in \Cref{fig:1}
illustrates this fact by showing in gray the regions where LMC outperforms KLMC.
\begin{figure}
\includegraphics[width=0.7\textwidth]{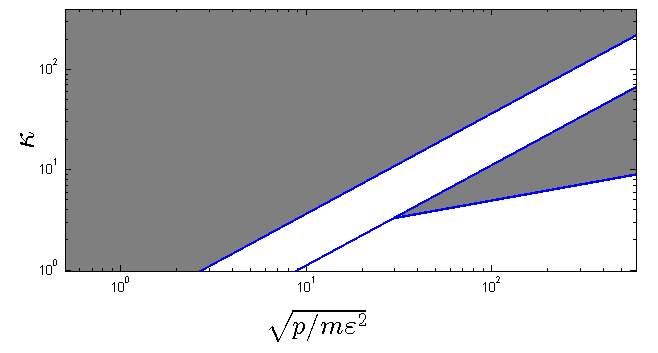}
\caption{This plot represents in the plane defined by coordinates
$(\sqrt{p/m\varepsilon^2},\varkappa)$ the regions where LMC leads
to smaller error than the KLMC (in gray). Please note that the axes are in
logarithmic scale.}
\label{fig:1}
\end{figure}

\section{Second-order KLMC and a bound on its error} \label{sec:2ndorderKLMC}

In this section, we propose another discretization of the kinetic Langevin process,
which is applicable when the function $f$ is twice differentiable. We show below
that this new discretization leads to a provably better sampling error under the
condition that the Hessian matrix of $f$ is Lipschitz-continuous with respect to
the spectral norm.
At any iteration $k\in\mathbb N$, we define $\bfH_k = \nabla^2 f(\bvartheta_k)$ and
\begin{align}\label{KLMC2}
\begin{bmatrix}
	\bv_{k+1}\\[4pt]
	\bvartheta_{k+1}
\end{bmatrix}
&=
\begin{bmatrix}
	\psi_0(h)\bv_k-\psi_1(h)\nabla f(\bvartheta_k) -
	\varphi_2(h)\bfH_k\bv_k\\[4pt]
	\bvartheta_{k} + \psi_1(h)\bv_k -\psi_2(h)\nabla f(\bvartheta_k)
	-\varphi_3(h)\bfH_k\bv_k
\end{bmatrix}
+ \sqrt{2\gamma }
\begin{bmatrix}
	\bxi_{k+1}^{(1)} -\bfH_k\bxi_{k+1}^{(3)}\\[4pt]
	\bxi_{k+1}^{(2)} -\bfH_k\bxi_{k+1}^{(4)}
\end{bmatrix},
\end{align}
where

\vspace{-12pt}
\begin{itemize}\itemsep=1pt
\item $\psi_0,\psi_1,\psi_2$ are defined as in the beginning of the previous section,
\item $\varphi_{k+1}(t) = \int_0^t e^{-\gamma (t-s)} \psi_k(s)\,ds$ for every $t>0$,
\item the $4p$ dimensional random vectors $(\bxi_{k+1}^{(1)},\bxi_{k+1}^{(2)},
\bxi_{k+1}^{(3)},\bxi_{k+1}^{(4)})$ are iid Gaussian with zero mean,
\item for any fixed $j$, the $4$-dimensional random vectors
$\big([(\bxi_{j}^{(1)})_1,(\bxi_{j}^{(2)})_1,(\bxi_j^{(3)})_1,(\bxi_j^{(4)})_1]$,
$\ldots$,\\
$[(\bxi_{j}^{(1)})_p,(\bxi_{j}^{(2)})_p,(\bxi_j^{(3)})_p,(\bxi_j^{(4)})_p]\big)$
are iid with the covariance matrix
\begin{align}
\bar\bfC =
\int_0^h [\psi_0(t);\, \psi_1(t);\, \varphi_2(t);\, \varphi_3(t)]^\top
[\psi_0(t);\, \psi_1(t);\, \varphi_2(t);\, \varphi_3(t)]\,dt.
\end{align}
\end{itemize}
This definition is somewhat complicated, but it follows from an application
of the second-order Taylor approximation to the drift term of the kinetic
Langevin diffusion\footnote{For more detailed explanations, see \Cref{ssec:expKLMC2}}.
At this stage, one can note that if the Hessian $\bfH_k$
is zero, then the update rule \eqref{KLMC2} boils down to the update rule of
the KLMC algorithm in \eqref{KLMC}. Iterating the update rule \eqref{KLMC2} we
get a random variable that will be henceforth called KLMC2 or second-order
kinetic Langevin Monte-Carlo algorithm.

\begin{theorem}\label{th:4}
Assume that, for some constants $m,M,M_2>0$, the function $f$ is
$m$-strongly convex, its gradient is $M$-Lipschitz, and its Hessian is $M_2$-Lipschitz for the spectral norm.
In addition, let the initial condition of the second-order KLMC algorithm be drawn
from the product distribution $\mu = \mathcal N(\mathbf 0_p,\bfI_p)\otimes \nu_0$.
For every 
$$
\gamma\ge \sqrt{m+M}\quad \text{and}\quad h\le \frac{m}{5\gamma M}\wedge
\frac{m}{4\sqrt{5p}\,M_2}, 
$$
the distribution
$\nu_k^{\rm KLMC2}$  of the $k$th iterate $\bvartheta_k^{\rm KLMC2}$ of the
second-order KLMC algorithm \eqref{KLMC2} satisfies\footnote{One can see
from the proof that $e^{-p/2}$ in this inequality can be replaced
by the smaller quantity $e^{-\frac{m^2}{160 M_2^2h^2}}$.
}
\begin{align}
		W_2(\nu_k^{\rm KLMC2},\pi)
		&\le \sqrt{2}\Big(1-\frac{mh}{4\gamma}\Big)^{k}W_2(\nu_0,\pi)		
		+\frac{2h^2{M_2p}}{m}+\frac{h^2 M\sqrt{2Mp}}{{m}}
		+ \frac{8M}{m}\, h e^{-p/2} .\label{s5:1}
\end{align}
\end{theorem}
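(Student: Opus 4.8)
The plan is to follow the architecture of the proof of \Cref{th:3} — a one-step comparison of the scheme with the exact diffusion, followed by a geometric recursion driven by the contraction of the continuous-time process — but with a discretization-error analysis tailored to the second-order update \eqref{KLMC2}. By \Cref{lem:1} we take $u=1$ throughout. Let $\mu_k$ be the joint law of $(\bv_k^{\rm KLMC2},\bvartheta_k^{\rm KLMC2})$ on $\RR^{2p}$ and $p_*$ the joint invariant law; since $\pi$ is the position marginal of $p_*$ one has $W_2(\nu_k^{\rm KLMC2},\pi)\le W_2(\mu_k,p_*)$, and since $\mu_0=\mathcal N(\mathbf 0_p,\bfI_p)\otimes\nu_0$ and $p_*=\mathcal N(\mathbf 0_p,\bfI_p)\otimes\pi$ share their velocity marginal, coupling velocities perfectly gives $W_2(\mu_0,p_*)\le W_2(\nu_0,\pi)$. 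On $[kh,(k+1)h]$ I would run, from the current state $(\bv_k,\bvartheta_k)$, the exact kinetic Langevin diffusion \eqref{kinetic} driven by the Brownian motion that generates the Gaussian vectors $\bxi^{(i)}_{k+1}$ of \eqref{KLMC2}; this synchronous coupling is what makes $\bar\bfC$ the correct covariance and it lets us control the gap between scheme and diffusion pathwise. Because the step-sizes in the statement lie far below the threshold $mh/\gamma>\ln\sqrt2$ that would make \eqref{contr:2} usable directly in a recursion, one must instead work, as in the proof of \Cref{th:1}, in the weighted Euclidean norm $\|\cdot\|_*$ on $\RR^{2p}$ attached to the matrix $\bfP$ diagonalizing $\bfM$ (with $v=0$), in which the coupled difference equation \eqref{ODE2}--\eqref{ODE3} contracts \emph{pathwise} at the genuine rate $e^{-(m/\gamma)h}$, the passage between $\|\cdot\|_*$ and $\|\cdot\|_2$ accounting for the prefactor $\sqrt2$ of \eqref{contr:2}, which will therefore surface only once, as an overall multiplicative factor of the final bound. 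Building the coupling recursively then yields, with $D_k$ the $\|\cdot\|_*$-Wasserstein distance between $\mu_k$ and $p_*$,
\begin{align}
D_{k+1}\ \le\ e^{-(m/\gamma)h}\,D_k\ +\ E_k,\qquad E_k:=\big\|\,\mathcal E_k\,\big\|_{L^2},\nonumber
\end{align}
where $\mathcal E_k$ is the one-step gap between the KLMC2 update and the exact flow under the synchronous coupling.

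The crux is the estimate of $E_k$. As explained in \Cref{ssec:expKLMC2}, the update \eqref{KLMC2} is the exact solution of the linear SDE obtained by replacing, on the segment, $\nabla f(\bL_t)$ by the surrogate $\nabla f(\bvartheta_k)+\psi_1(t-kh)\,\bfH_k\bv_k$ (the first-order Taylor expansion of $\nabla f$ at $\bvartheta_k$, with the displacement $\bL_t-\bvartheta_k$ kept only to its leading order $\psi_1(t-kh)\bv_k$). Hence, along the exact flow $(\bV,\bL)$ started at $(\bv_k,\bvartheta_k)$, the difference of drifts is exactly
\begin{align}
R_t\ :=\ \nabla f(\bL_t)-\nabla f(\bvartheta_k)-\psi_1(t-kh)\,\bfH_k\bv_k\ =\ \int_{kh}^{t}\!\Big(\nabla^2f(\bL_s)\bV_s - e^{-\gamma(s-kh)}\bfH_k\bv_k\Big)\,ds,\nonumber
\end{align}
which vanishes to first order at $t=kh$. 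Splitting the integrand as $\nabla^2f(\bL_s)\big(\bV_s-e^{-\gamma(s-kh)}\bv_k\big)+\big(\nabla^2f(\bL_s)-\bfH_k\big)e^{-\gamma(s-kh)}\bv_k$, bounding the first piece by $M$ times the (small) deviation of $\bV_s$ from its free part and the second by $M_2\|\bL_s-\bvartheta_k\|\,\|\bv_k\|$, and using elementary one-step bounds on $\|\bV_s-e^{-\gamma(s-kh)}\bv_k\|$ and $\|\bL_s-\bvartheta_k\|$ over a window of length $\le h$ (each controlled by $h\|\nabla f(\bvartheta_k)\|$, the Brownian increment, and $h\|\bv_k\|$), one gets a pathwise bound on $\|R_t\|$. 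Propagating $R_t$ through the damped linear dynamics of the coupled difference — solved in the $\bfP$-coordinates exactly as in \eqref{ODE3} — and converting norms, one reaches a pathwise bound on $\|\mathcal E_k\|_*$ that is cubic in $h$ and at most quadratic in the triple $\big(\|\bv_k\|,\|\nabla f(\bvartheta_k)\|,\text{Brownian increment}\big)$.

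It remains to take $L^2$ norms and iterate. On the ``good'' event where $\|\bv_k\|$ and the relevant Gaussian increments have the typical order $O(\sqrt p)$ — whose complement has probability at most $e^{-p/2}$ by a Gaussian tail bound together with $h\le m/(4\sqrt{5p}\,M_2)$, which is exactly what guarantees $M_2\|\bL_s-\bvartheta_k\|\ll m$ there — the quadratic terms contribute, once the moments of $\|\bv_k\|$ and $\|\nabla f(\bvartheta_k)\|^2\lesssim Mp$ are controlled along the coupling (the discrepancy from the stationary moments being of lower order and absorbed), a per-step bound of exactly the order $\frac{mh}{\gamma}\big(\frac{h^2M_2p}{m}+\frac{h^2M\sqrt{Mp}}{m}\big)$; on the complement one keeps the crude, step-size-free bound $\|\mathcal E_k\|_*\lesssim \frac{M}{\gamma}h(\cdots)$ and applies Cauchy--Schwarz, producing the $\frac{8M}{m}h\,e^{-p/2}$ term (the sharper $e^{-m^2/(160M_2^2h^2)}$ of the footnote comes from localizing $\|\bv_k\|^2$ at level $\asymp m^2/(M_2^2h^2)$ instead). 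Iterating $D_{k+1}\le e^{-(m/\gamma)h}D_k+E$ gives $D_k\le e^{-(m/\gamma)hk}D_0+E/(1-e^{-(m/\gamma)h})$; since $h\le m/(5\gamma M)$ makes $mh/\gamma$ small, one has $1-e^{-(m/\gamma)h}\ge 0.75\,(m/\gamma)h$ and $e^{-(m/\gamma)h}\le 1-mh/(4\gamma)$, so the geometric sum multiplies each $E_k$ by a factor $\asymp\gamma/(mh)$ that cancels its $mh/\gamma$ and leaves the two $O(h^2)$-terms and the $O(h)\,e^{-p/2}$-term of \eqref{s5:1}, while $D_0\le\sqrt2\,W_2(\mu_0,p_*)\le\sqrt2\,W_2(\nu_0,\pi)$ together with the norm equivalence produces the leading term $\sqrt2(1-mh/(4\gamma))^kW_2(\nu_0,\pi)$; combining with $W_2(\nu_k^{\rm KLMC2},\pi)\le W_2(\mu_k,p_*)\le D_k$ gives \eqref{s5:1}.

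The main obstacle is the one-step estimate of $E_k$: one has to extract precisely three powers of $h$ and one inverse power of $\gamma$ per step — so that the $\asymp\gamma/(mh)$ coming from the geometric series leaves exactly the announced $h^2/m$ scaling, with no residual dependence on $\gamma$ or on the condition number beyond $M/m$ — and, the genuinely new point relative to \Cref{th:3}, one must cope with the \emph{quadratic} dependence of the second-order Taylor remainder $R_t$ on the velocity $\bv_k$: this is what makes a plain expectation insufficient and forces the good/bad-event decomposition, the localization of $\|\bv_k\|$ at scale $m/(M_2h)$, and hence the appearance of the $e^{-p/2}$ (equivalently $e^{-m^2/(160M_2^2h^2)}$) correction term.
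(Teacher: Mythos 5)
Your architecture is the one of \Cref{th:3} (restart the \emph{exact} diffusion from the algorithm's current state on each step, use the continuous-time contraction in the $\bfP$-coordinates, and bound the one-step gap), but for the second-order scheme this route has a genuine hole exactly where you write that the moments of $\|\bv_k\|$ and $\|\nabla f(\bvartheta_k)\|$ are ``controlled along the coupling (the discrepancy from the stationary moments being of lower order and absorbed)''. Because the second-order Taylor remainder is \emph{quadratic} in the velocity, your one-step error $E_k$ requires fourth moments of $\|\bv_k\|$ (and second moments of $\|\nabla f(\bvartheta_k)\|$) at the \emph{algorithm's} iterates, uniformly in $k$; the $W_2$-recursion you are building only controls second moments of differences, so these bounds do not come for free and are nowhere established. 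The same problem affects your good/bad event: you invoke a Gaussian tail bound for $\|\bv_k\|$, but $\bv_k$ is not Gaussian (only approximately so), so the $e^{-p/2}$ term cannot be obtained this way without a separate concentration argument for the iterates. As written, the central estimate of $E_k$ therefore rests on unproved moment and tail control of the discrete trajectory.

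The paper's proof is organized precisely to avoid this. It compares the stationary exact diffusion (started from $\mu^*=\mathcal N(\mathbf 0_p,\bfI_p)\otimes\pi$) with a copy of the \emph{discretized} dynamics started at the same stationary point (\Cref{prop:2}): there the quadratic-in-velocity remainder is integrated against the exact invariant law, so one uses $\bfE[\|\bV_0\|_2^4]=p^2+2p$ and $\|\nabla f(\bL_0)\|_{\LL_2}\le\sqrt{Mp}$ with no trajectory moments needed. The contraction in the recursion is then not the continuous-time one but the contraction of the discrete linearized map itself, $\|\bfI_{2p}-\psi_1(t)\bfQ_0-\bfN_0(t)\|\le 1-mt/(2\gamma)+M_2\sqrt{a}\,t^2/\gamma$ (\Cref{prop:3}), and the exponential correction $e^{-(a-p)/8}$ (hence $e^{-p/2}$, or the sharper $e^{-m^2/(160M_2^2h^2)}$) arises from the cross term $(\nabla^2 f(\bL_0)-\nabla^2 f(\tilde\bL_0))\bV_0$ in the comparison of the two discretized flows, truncating $\|\bV_0\|_2^2$ at level $a$ with genuine $\chi^2$ tail bounds for the exactly Gaussian stationary velocity --- not from a truncation of $\|\bv_k\|$. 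If you want to keep your decomposition, you would have to add a self-contained uniform-in-$k$ fourth-moment (and sub-Gaussian tail) bound for the KLMC2 velocity iterates; otherwise the argument should be reorganized along the lines of \Cref{prop:2}--\Cref{prop:4}.
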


Several important consequences can be drawn from this result. First, 
the value of the parameter $\gamma$ minimizing the right hand side is its smallest
possible value $\gamma = \sqrt{m+M}$. Second, one can note that the last term of 
the obtained upper bound is independent of dimension $p$ and decreases exponentially
fast in $1/h$. This term is in most cases negligible with respect to the other terms
involved in the upper bound. In particular, we deduce from this result that if 
the Lipschitz constants $M$ and $M_2$ are bounded and the strong convexity constant 
$m$ is bounded  away from zero, then the KLMC2 algorithm achieves the precision level 
$\varepsilon$ after $K_\varepsilon$ iterations, with $K_\varepsilon$ being of order 
$\sqrt{p/\varepsilon}$, up to a logarithmic factor. Finally, if we neglect the last 
term in the upper bound of \Cref{th:4}, and choose the parameters $h$ and $k$ 
so that the other terms are equal to $\varepsilon/\sqrt{4m}$, we get that
the number of iteration $K_{\varepsilon}$ to achieve an error $\varepsilon/\sqrt{m}$ 
scales, up to a logarithmic factor, as  $\sqrt{M}/(mh_\varepsilon) = 
\sqrt{p}\,\varkappa_2^2 + \sqrt{p/\varepsilon}\,\varkappa_2^{5/4}$, where 
$\varkappa_2 = (M_2^{2/3} + Mp^{-1/3})/m$ is a version of the condition number 
taking into account the Hessian-Lipschitz assumption.

It is interesting to compare this result to the convergence result for the LMCO
algorithm established in \citep{DalKar17}. We can note that the number of iterations
that are sufficient for the KLMC2 to achieve the error $\varepsilon$ is much smaller
than the corresponding number for the LMCO: $\sqrt{p/\varepsilon}$ versus
$p/\varepsilon$. In addition, the KLMC2 algorithm does not need to compute matrix
exponentials neither to do matrix inversion. The most costly operations are that
of computing the products of the $p\times p$ Hessian and the vectors $\bv_k$, 
$\bxi_{k+1}^{3}$ and $\bxi_{k+1}^{3}$. In most cases, the complexity of these 
computations scales linearly in $p$. 

As a conclusion, to the best of our knowledge, the second-order KLMC algorithm
provides the best known convergence rate $\sqrt{p/\varepsilon}$ for a target density
$\pi$ having a log-density that is concave and Hessian-Lipschitz.

\section{Related work}\label{sec:relwork}

The idea of using the Langevin diffusion (see \citep{Pavliotis14} for an introduction
to this topic) for approximating a random variable drawn from its invariant distribution
is quite old and can be traced back at least to \citep{RobertsTweedie96}. Since then,
many papers focused on analyzing the asymptotic behavior of the Langevin-based methods
under various assumptions, see \citep{Lamberton1,Lamberton2,StramerTweedie99-1,StramerTweedie99-2, douc2004,Pillai2012,
Xifara14,RobertsStramer02,RobertsRosenthal98,Hairer2013} and the references therein.
Convergence to the invariant distribution for Langevin processes is studied
in \citep{Desvillettes,Helffer,Dolbeault}.

Non-asymptotic and computable bounds on the convergence
to equilibrium of the kinetic Langevin diffusion have been recently obtained in
\citep{Eberle, 2018Cheng,Cheng2}. While \citep{Cheng2} considers only the convex case,
\citep{Eberle, 2018Cheng} deal also with nonconvexity. On the one hand,
\citep{2018Cheng} provide results only for a fixed value of parameters $(\gamma,u) = (2,1/M)$.
On the other hand, if we instantiate results of \citep{Eberle} to the case of convex functions $f$,
convergence to the invariant density is proved under the condition $\gamma^2\ge 30 Mu$.
This is to be compared to the conditions of \Cref{th:1} that establishes exponential
convergence as soon as $\gamma^2> Mu$.

Nonasymptotic bounds on the precision of the Langevin Monte Carlo under strong convexity
have been established in \citep{Dal14} and then extended and refined in a series of papers
\citep{Durmus2,Bubeck15,DalalyanColt,Cheng1,durmus2017,brosse17a,Durmus4,luu,pmlr-v75-bernton18a}. Very recently, it was proved in \citep{pmlr-v75-dwivedi18a} that applying
a Metropolis-Hastings correction to the LMC leads to improved dependence on the
target precision $\epsilon$ of the number of gradient evaluations. The fact that the discretized version of the kinetic Langevin diffusion may outperform its highly
overdamped counterpart was observed and quantified in \citep{Cheng2}.

Previous work has also studied the precision of Langevin algorithms in the case when the
gradient evaluations are contaminated by some noise \citep{DalalyanColt,DalKar17,Cheng2,Baker2018,pmlr-v80-chatterji18a}
and the relation with stochastic optimization \citep{raginsky17a,zhang17b,2017arXiv170706618X,2017arXiv170706386D}.
There are certainly many other papers related to the present work that are not mentioned in this
section. There is a vast literature on this topic and it will be impossible to quote all
the papers. We believe that the papers cited here and the references therein provide a good
overview of the state of the art.

\section{Conclusion} \label{sec:concl}

In order to summarize the content of the previous sections, let us return, on by one, to the questions raised in the
introduction. First, concerning the mixing properties of the kinetic Langevin diffusion for general values of $u$ and $\gamma$,
we have established that as soon as $\gamma^2 > Mu$, the process mixes exponentially fast with  a rate at least equal to
$\{mu \wedge (\gamma^2-Mu)\}/\gamma$. Therefore, for fixed values of $m$, $M$ and $u$, the nearly fastest rate of mixing
is obtained for $\gamma^2 = (m+M)u$ and is equal to $m/\sqrt{m+M}$.

To answer the second question, we have seen that optimization with respect to $\gamma$ and $u$ leads to improved constants
but does not improve the rate. Indeed, if we use the values of $\gamma$ and $u$ used in \citep{Cheng2} (that is $\gamma= 2$
and $u= 1/M$, which in view of \Cref{lem:1} are equivalent to $\gamma = 2\sqrt{M}$ and $u=1$) lead to a bound on the
number of iterates sufficient to achieve a precision $\varepsilon$ that is of the same order as the optimized one given in
\eqref{KKLMC}. Interestingly, our analysis revealed that not only the numerical constants of the result in \citep{Cheng2}
can be improved, but also the dependence on the condition number $\varkappa = M/m$ can be made better. Indeed, we have
managed to replace the factor $\varkappa^2$ by $\varkappa^{3/2}$. Such an improvement might have important consequences
in generalizing the results to the case of a convex function which is not strongly convex. This line of research will be
explored in a future work. Our bound exhibits also a better dependence on the error of the first step: it is
logarithmic in our result while it was linear in \citep{Cheng2}.

Finally, we have given an affirmative answer to the third question. We have shown that leveraging second-order information
may reduce the number of steps of the algorithm by a factor proportional to $1/\sqrt{\varepsilon}$, where $\varepsilon$ is
the target precision. In order to better situate this improvement in the context of prior work, the table below reports
the order of magnitude of the number of steps\footnote{To ease the comparison, we consider $\varkappa$ as a fixed constant
and do not report the dependence on $\varkappa$ in this table.} of Langevin related algorithms in the strongly convex case:

\begin{center}
\begin{tabular}{c|c|c}
\toprule
1st-order LMC  & 1st-order KLMC  & 2nd-order KLMC \\
\midrule
\citep{Durmus2} &  \citep{Cheng2}  & \Cref{th:4}\\
\citep{DalKar17} & and \Cref{th:3} & \\
\midrule
$ p/\varepsilon$  & $ \sqrt{p}/\varepsilon$ & $\sqrt{p/\varepsilon}$\\
\bottomrule
\end{tabular}
\end{center}

\section{Proof of the mixing rate}\label{sec:proof1}

This section is devoted to proofs of the results stated in \Cref{sec:contraction}.
Let $\bL_0,\bL_0'$ and $\bV_0$ be three $p$-dimensional random vectors defined
on the same probability space such that
\vspace{-12pt}
\begin{itemize}\itemsep=0pt
\item $\bV_0$ is independent of $(\bL_0,\bL_0')$,
\item $\bV_0\sim \mu_1$, whereas $\bL_0\sim \mu_2$ and $\bL'_0\sim\mu_2'$,
\item $W_2^2(\mu_2,\mu_2') = \bfE[\|\bL_0-\bL_0'\|_2^2]$.
\end{itemize}
\vspace{-12pt}

Let $\bW$ be a Brownian motion on the same probability space. We define
$(\bV,\bL)$ and $(\bV',\bL')$ as kinetic Langevin diffusion processes
driven by the same Brownian motion $\bW$ and satisfying the initial condition
$\bV'_0=\bV_0$. From the definition of the Wasserstein distance, it follows
that
$$
W_2^2(\mu\bfP_t^{\bL},\mu'\bfP_t^{\bL}) \le \bfE[\|\bL_t-\bL_t'\|_2^2].
$$
In view of this inequality, it suffices to find an appropriate upper
bound on the right hand side of the last display, in order to prove
\Cref{th:1}. This upper bound is provided below in \Cref{prop:1}.

\begin{proposition}\label{prop:1}
Let $\bV_0,\bL_0$ and $\bL_0'$ be random vectors in $\mathbb{R}^{p}$. Let $(\bV_t,\bL_t)$
and $(\bV'_t,\bL'_t)$ be kinetic Langevin diffusions driven by the same Brownian motion
and starting from $(\bV_0,\bL_0)$ and $(\bV_0,\bL'_0)$, respectively. Let $v$ be an
arbitrary real number from $[0,\gamma/2)$. We have
\begin{align}
\|\bL_t-\bL'_t\|_2
	&\le \frac{\sqrt{2((\gamma-v)^2+v^2)}}{\gamma-2v}
	\exp\bigg\{\frac{(v^2-m)\vee(M-(\gamma-v)^2)}{\gamma-2v}\,t\bigg\}
	\|\bL_0-\bL'_0\|_2,\quad \forall t\ge 0.
\end{align}
\end{proposition}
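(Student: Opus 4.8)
The plan is to carry out the diagonalization sketch described after Theorem~\ref{th:1} in full rigor. Set $\bZ_t = [\bV_t - \bV'_t;\ \bL_t - \bL'_t] \in \RR^{2p}$. By \eqref{ODE2} this solves the non-autonomous linear ODE $\dot\bZ_t = \bfM(t)\bZ_t$ with $\bfM(t) = \begin{bmatrix} -\gamma\bfI_p & -\bfH_t \\ \bfI_p & \mathbf 0 \end{bmatrix}$ and $m\bfI_p \preceq \bfH_t \preceq M\bfI_p$ for all $t$, since $\bfH_t = \int_0^1 \nabla^2 f(\bL'_t + s(\bL_t - \bL'_t))\,ds$ is a genuine symmetric Hessian-average. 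First I would fix $v \in [0,\gamma/2)$ and explicitly diagonalize the constant surrogate matrix $\bfM = \begin{bmatrix} -\gamma\bfI_p & -v^2\bfI_p \\ \bfI_p & \mathbf 0\end{bmatrix}$: its $p\times p$-block structure means everything reduces to the $2\times 2$ matrix $\begin{bmatrix} -\gamma & -v^2 \\ 1 & 0\end{bmatrix}$, whose eigenvalues are $-v$ and $-(\gamma - v)$ (the characteristic polynomial is $\lambda^2 + \gamma\lambda + v^2 = (\lambda+v)(\lambda+\gamma-v)$), so I can write down $\bfP$ and $\bfP^{-1}$ in closed form as block matrices with scalar entries depending on $v,\gamma$. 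I would record the condition numbers / operator norms of $\bfP$ and $\bfP^{-1}$; these are exactly what will produce the prefactor $\sqrt{2((\gamma-v)^2+v^2)}/(\gamma-2v)$.

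Next, set $\bxi_t = \bfP^{-1}\bZ_t$, so $\dot\bxi_t = \{\bfP^{-1}\bfM(t)\bfP\}\bxi_t$ as in \eqref{ODE3}. The key computation is to bound $\tfrac{d}{dt}\|\bxi_t\|_2^2 = 2\,\bxi_t^\top \mathrm{Sym}(\bfP^{-1}\bfM(t)\bfP)\,\bxi_t$, where $\mathrm{Sym}(A) = (A + A^\top)/2$. I would write $\bfM(t) = \bfM + \begin{bmatrix}\mathbf 0 & -(\bfH_t - v^2\bfI_p) \\ \mathbf 0 & \mathbf 0\end{bmatrix}$, so $\bfP^{-1}\bfM(t)\bfP = \mathrm{diag}(-v\bfI_p, -(\gamma-v)\bfI_p) + \bfP^{-1}\begin{bmatrix}\mathbf 0 & -(\bfH_t - v^2\bfI_p) \\ \mathbf 0 & \mathbf 0\end{bmatrix}\bfP$. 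The first piece contributes a clean negative-definite symmetric part $\mathrm{diag}(-v\bfI_p, -(\gamma-v)\bfI_p)$; the second piece is a perturbation whose symmetrization I must bound using only $-(M-v^2)\bfI_p \preceq -(\bfH_t - v^2\bfI_p) \preceq -(m - v^2)\bfI_p$ (note both signs can occur, hence the $\vee$ in the exponent). Because $\bfP$ has scalar-block structure, all these $2p\times 2p$ symmetric quadratic forms decouple over the $p$ coordinate-pairs into $p$ copies of a $2\times 2$ (or $4\times 4$ after accounting for the $\bfH_t$-block) quadratic form, so the spectral bound reduces to an elementary $2\times 2$ eigenvalue estimate. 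Assembling, I expect to obtain $\tfrac{d}{dt}\|\bxi_t\|_2^2 \le \tfrac{2\big((v^2-m)\vee(M-(\gamma-v)^2)\big)}{\gamma-2v}\,\|\bxi_t\|_2^2$, where the factor $1/(\gamma-2v)$ emerges because the natural coordinates in which the symmetric part is manifestly diagonal are not the $\bxi$-coordinates but a further rescaling of them — one must track the distortion between the "diagonalizing" inner product and the Euclidean one on $\bxi$. Grönwall then gives $\|\bxi_t\|_2 \le e^{\lambda(v) t}\|\bxi_0\|_2$ with $\lambda(v) = \tfrac{(v^2-m)\vee(M-(\gamma-v)^2)}{\gamma-2v}$.

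Finally I would translate back: $\|\bL_t - \bL'_t\|_2 \le \|\bZ_t\|_2 = \|\bfP\bxi_t\|_2 \le \|\bfP\|_{\mathrm{op}} e^{\lambda(v)t}\|\bxi_0\|_2 \le \|\bfP\|_{\mathrm{op}}\|\bfP^{-1}\|_{\mathrm{op}} e^{\lambda(v)t}\|\bZ_0\|_2$, and since $\bV_0 = \bV'_0$ we have $\bZ_0 = [\mathbf 0;\ \bL_0 - \bL'_0]$, so $\|\bZ_0\|_2 = \|\bL_0 - \bL'_0\|_2$; one gains a further factor here by using only the second block of $\bZ_0$ together with the explicit form of $\bfP^{-1}$ rather than the crude $\|\bfP^{-1}\|_{\mathrm{op}}$, and I would verify that the resulting product of constants equals exactly $\sqrt{2((\gamma-v)^2+v^2)}/(\gamma-2v)$. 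The main obstacle I anticipate is the bookkeeping in the quadratic-form estimate of the middle paragraph: getting the sharp constant $1/(\gamma-2v)$ (rather than something merely proportional) requires choosing the right quadratic Lyapunov function — equivalently, the right inner product $\bxi \mapsto \bxi^\top \bfQ \bxi$ with $\bfQ$ adapted to $\bfP$ — so that the cross-terms produced by the off-diagonal $\bfH_t$-block are exactly absorbed; a sub-optimal choice still yields exponential contraction but with worse constants. Everything else is routine once the $2\times 2$ reduction is in place.
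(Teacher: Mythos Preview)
Your approach is essentially the paper's: it carries out exactly the change of variables you describe, working directly in the new coordinates. Two remarks will save you time.

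First, a slip: the surrogate $\begin{bmatrix}-\gamma & -v^2 \\ 1 & 0\end{bmatrix}$ has characteristic polynomial $\lambda^2+\gamma\lambda+v^2$, which does \emph{not} factor as $(\lambda+v)(\lambda+\gamma-v)$ (that product is $\lambda^2+\gamma\lambda+v(\gamma-v)$). The matrix with eigenvalues $-v$ and $-(\gamma-v)$ has $-v(\gamma-v)$ in the upper-right corner; equivalently, work directly with the coordinates $\psi_t=(\bV_t-\bV'_t)+(\gamma-v)(\bL_t-\bL'_t)$ and $z_t=-(\bV_t-\bV'_t)-v(\bL_t-\bL'_t)$, as the paper does.

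Second, the obstacle you anticipate is illusory. Computing $\bfP^{-1}\bfM(t)\bfP$ in these coordinates gives
\[
\frac{1}{\gamma-2v}
\begin{bmatrix}
v^2\bfI_p-\bfH_t & v(\gamma-v)\bfI_p-\bfH_t\\
\bfH_t-v(\gamma-v)\bfI_p & \bfH_t-(\gamma-v)^2\bfI_p
\end{bmatrix},
\]
and since $\bfH_t$ is symmetric the two off-diagonal blocks are exact negatives of each other. Hence in $\tfrac{d}{dt}\|\bxi_t\|_2^2=2\bxi_t^\top\bfP^{-1}\bfM(t)\bfP\,\bxi_t$ the cross terms cancel identically, leaving only
\[
\frac{2}{\gamma-2v}\Big(\psi_t^\top(v^2\bfI_p-\bfH_t)\psi_t+z_t^\top(\bfH_t-(\gamma-v)^2\bfI_p)z_t\Big)
\le \frac{2\big((v^2-m)\vee(M-(\gamma-v)^2)\big)}{\gamma-2v}\,\|\bxi_t\|_2^2.
\]
No auxiliary quadratic form $\bfQ$ is needed; the Euclidean norm on $(\psi_t,z_t)$ already yields the sharp constant. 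The prefactor then falls out exactly from $\bL_t-\bL'_t=(\psi_t+z_t)/(\gamma-2v)$, the Cauchy--Schwarz bound $\|\psi_t+z_t\|_2\le\sqrt{2}\,\|(\psi_t,z_t)\|_2$, and $\|(\psi_0,z_0)\|_2=\sqrt{(\gamma-v)^2+v^2}\,\|\bL_0-\bL'_0\|_2$ (using $\bV_0=\bV'_0$).
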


\begin{remark}
As a consequence, we can see that for $\gamma^2\ge 2(M+m)$ by setting
$$
v = \frac{\gamma-\sqrt{\gamma^2-4m}}{2}\ge \frac{m}{\gamma}.
$$
we arrive at
\begin{equation}
\|\bL_t-\bL'_t\|_2
	\le  \bigg(\frac{2\gamma^2-4m}{\gamma^2-4m}\bigg)^{1/2}
	e^{-v t}\,\|\bL_0-\bL'_0\|_2,\qquad \forall t\ge 0.
\end{equation}
\end{remark}

\begin{proof}
We will use the following short hand notations $\psi_t\triangleq (\bV_t+\lambda_+\bL_t)
-(\bV'_t+\lambda_+ \bL'_t)$ and $z_t\triangleq (-\bV_t-\lambda_-\bL_t)+ \bV'_t+
\lambda_-\bL'_t$, where $\lambda_+$ and $\lambda_-$ are two positive numbers such that
$\lambda_++\lambda_-=\gamma$ and $\lambda_+>\lambda_-$. First note that using Taylor's
theorem with the remainder term in integral form, we get
$$
\nabla f(\bL_t)-\nabla f(\bL'_t) = \bfH_t(\bL_t-\bL'_t)
$$
with $\bfH_t\triangleq\int_0^1\nabla^2 f(\bL_t-x(\bL_t-\bL'_t))dx$. In view of
this formula and the fact that $(\bV,\bL)$ and $(\bV',\bL')$ satisfy the SDE
\eqref{kinetic}, we obtain
\begin{align}
\frac{d}{dt} \psi_t
	&=-\gamma(\bV_t-\bV'_t) - \left(\nabla f(\bL_t)-\nabla f(\bL'_t)\right)
			+\lambda_+(\bV_t-\bV'_t)\\
	&=\frac{(\lambda_+-\gamma)(\lambda_-\psi_t+\lambda_+ z_t)}{\lambda_--\lambda_+}
	-\frac{\bfH_t(\psi_t+z_t)}{\lambda_+-\lambda_-}\\
	&= \frac{(\lambda_-^2\bfI-\bfH_t)\psi_t+(\lambda_-\lambda_+\bfI-\bfH_t)z_t}
	{\lambda_+-\lambda_-}.
\end{align}
In the above inequalities, we have used that $\lambda_+-\gamma = -\lambda_-$.
Similar computations yield
\begin{align}
\frac{d}{dt} z_t
	&=\gamma(\bV_t-\bV'_t) + \left(\nabla f(\bL_t)-\nabla f(\bL'_t)\right)
			-\lambda_-(\bV_t-\bV'_t)\\
	&=\frac{(\gamma-\lambda_-)(\lambda_-\psi_t+\lambda_+ z_t)}{\lambda_--\lambda_+}
	+\frac{\bfH_t(\psi_t+z_t)}{\lambda_+-\lambda_-}\\
	&= \frac{(\bfH_t-\lambda_-\lambda_+\bfI)\psi_t+(\bfH_t-\lambda_+^2\bfI)z_t}
	{\lambda_+-\lambda_-}.
\end{align}
From these equations, we deduce that
\begin{align}
    \frac{d}{dt}
		\left\|
		\begin{bmatrix}
     \psi_t  \\
      z_t
		\end{bmatrix}
		\right\|_2^2
				& = 2\psi_t^\top\frac{d\psi_t}{dt} + 2z_t^\top\frac{dz_t}{dt}\\
				&=\frac{2}{\lambda_+-\lambda_-}
				\Big\{ \psi_t^\top(\lambda_-^2\bfI-\bfH_t)\psi_t +
					z_t^\top(\bfH_t-\lambda_+^2\bfI)z_t\Big\}	\\			
				&\le\frac{2}{\lambda_+-\lambda_-}
				\Big\{ (\lambda_-^2-m)\|\psi_t\|_2^2 +
					(M-\lambda_+^2)\|z_t\|_2^2\Big\}\\
				&\le \frac{2\{(\lambda_-^2-m)\vee(M-\lambda_+^2)\}}{\lambda_+-\lambda_-}
						\left\|
						\begin{bmatrix}
						\psi_t  \\
						z_t
						\end{bmatrix}
						\right\|_2^2.
\end{align}
An application of Gronwall's inequality yields
$$
\left\|
	\begin{bmatrix}
     \psi_t  \\
      z_t
	\end{bmatrix}\right\|_2\le \exp\left\{\frac{(\lambda_-^2-m)\vee(M-\lambda_+^2)}
	{\lambda_+-\lambda_-}\,t\right\}\left\|\begin{bmatrix}
     \psi_0 \\
      z_0
\end{bmatrix}\right\|_2,\qquad \forall t\ge 0.
$$
Since $\bV_0=\bV'_0$ and $\bL_t-\bL'_t = (\psi_t+z_t)/(\lambda_+-\lambda_-)$, we get
\begin{align}
\|\bL_t-\bL'_t\|_2
	&\le \frac{\sqrt{2}}{\lambda_+-\lambda_-}\left\|
	\begin{bmatrix}
     \psi_t  \\
      z_t
	\end{bmatrix}\right\|_2\\
	&\le \frac{\sqrt{2(\lambda_+^2+\lambda_-^2)}}{\lambda_+-\lambda_-}
	\exp\bigg\{\frac{(\lambda_-^2-m)\vee(M-\lambda_+^2)}
	{\lambda_+-\lambda_-}\,t\bigg\}
	\|\bL_0-\bL'_0\|_2,\qquad \forall t\ge 0,
\end{align}
and the claim of the proposition follows.
\end{proof}

\section{Proof of the convergence of the first-order KLMC}\label{sec:proofKLMC}

This section contains the complete proof of \Cref{th:3}.
We first  write
\begin{align}\label{s8:1}
W_2(\nu_k,\pi) = W_2(\nu_k,\mu^*\bfP^{\bL}_{kh}),
\end{align}
where $\mu^* = \mathcal N(\mathbf 0_p,\bfI_p)\otimes\pi$
and $\mu^*\bfP^{\bL}_{kh}$ is the distribution\footnote{In other words, $\mu^*\bfP^{\bL}_{kh}$ is the first marginal
of the distribution $\mu^*\bfP^{(\bL,\bV)}_{kh}$, the last notation being standard in the theory of Markov processes.}
of the kinetic Langevin process $\bL$ at
time instant $kh$ when the initial condition of this process is drawn from $\mu^*$.
In order to upper bound the term in the right hand side of the last display, we
introduce the discretized version of the kinetic Langevin diffusion:
$(\tilde\bV_0,\tilde\bL_0) \sim \mu$ and for every  $j=0,1,\ldots,k$ and for
every $t\in ]jh,(j+1)h]$,
\begin{align}\label{s8:2}
    \tilde\bV_t&=\tilde\bV_{jh}e^{-\gamma (t-jh)}-\int_{jh}^{t}e^{-\gamma (t-s)}ds
		\nabla f(\tilde\bL_{jh})+\sqrt{2\gamma}\int_{jh}^{t}e^{-\gamma (t-s)}\,\,d\bW_{jh+s}\\
     \tilde\bL_t&= \tilde\bL_{jh}+\int_{jh}^t\tilde\bV_{jh+s}\,ds.
\end{align}
We stress that $\bW$ in the above formula is the same Brownian motion as the one used
for defining the process $(\bV,\bL)$. Furthermore, we choose $\tilde\bV_0=\bV_0$ and
$(\bL_0,\tilde\bL_0)$ so that
\begin{align}\label{s8:3}
W_2^2(\nu_0,\pi) = \bfE[\|\bL_0-\tilde\bL_0\|_2^2].
\end{align}
The process $(\tilde\bV,\tilde\bL)$ realizes the synchronous coupling between the
sequences $\{(\bv_j,\bvartheta_j);j=0,\ldots,k\}$ and
$\{(\bV_{jh},\bL_{jh});j=0,\ldots,k\}$. Indeed, one easily checks by mathematical
induction that  $(\tilde\bV_{jh},\tilde\bL_{jh})$ has exactly the same distribution as
the vector $(\bv_j,\bvartheta_j)$. Therefore, we have
\begin{align}
W_2(\nu_k,\mu^*\bfP^{\bL}_{kh}) \le \big(\bfE[\|\tilde\bL_{kh}-\bL_{kh}\|_2^2]\big)^{1/2}
\triangleq \|\tilde\bL_{kh}-\bL_{kh}\|_{\LL_2}.
\end{align}
Let $\bfP$ be the matrix used in the proof of the contraction in continuous time for $v=0$, that is
$$
\bfP =
\frac1{\gamma}
\begin{bmatrix}
 \mathbf 0_{p\times p} & -\gamma\bfI_p\\
\bfI_p & \bfI_p
\end{bmatrix},\qquad \bfP^{-1}
=\begin{bmatrix}
\bfI_p & \gamma\bfI_p\\
-\bfI_p &  \mathbf 0_{p\times p}
\end{bmatrix}.
$$
We will now evaluate the sequence
\begin{align}
A_k  \triangleq \Bigg\|\bfP^{-1}
\begin{bmatrix}
\tilde\bV_{kh}-\bV_{kh}\\
\tilde\bL_{kh}-\bL_{kh}
\end{bmatrix}
\Bigg\|_{\LL_2}.
\end{align}
The rest of the proof, devoted to upper bounding the last $\mathbb{L}_2$-norm, is done by
mathematical induction. On each time interval $[jh,(j+1)h]$, we introduce
an auxiliary continuous-time kinetic Langevin process $(\bV',\bL')$ such
that $(\bV'_{jh},\bL'_{jh}) = (\tilde\bV_{jh},\tilde\bL_{jh})$ and
\begin{align}\label{kinetic2}
    d
		\begin{bmatrix}
		\bV'_t\\
		\bL'_t
		\end{bmatrix}
		&=
		\begin{bmatrix}
		-(\gamma \bV'_t + \nabla f(\bL'_t))\\
		\bV'_t
		\end{bmatrix}
		\,dt+\sqrt{2\gamma u}
		\begin{bmatrix}
		\bfI_p\\
		\mathbf 0_{p\times p}
		\end{bmatrix}
		\,d\bW_t,\qquad t\in[jh,(j+1)h].
\end{align}
By the triangle inequality, we have
\begin{align}
A_{j+1} &\le \Bigg\|\bfP^{-1}
\begin{bmatrix}
\tilde\bV_{(j+1)h}-\bV'_{(j+1)h}\\
\tilde\bL_{(j+1)h}-\bL'_{(j+1)h}
\end{bmatrix}
\Bigg\|_{\LL_2}+
\Bigg\|\bfP^{-1}
\begin{bmatrix}
\bV'_{(j+1)h}-\bV_{(j+1)h}\\
\bL'_{(j+1)h}-\bL_{(j+1)h}
\end{bmatrix}
\Bigg\|_{\LL_2}\\
&\le \Bigg\|\bfP^{-1}
\begin{bmatrix}
\tilde\bV_{(j+1)h}-\bV'_{(j+1)h}\\
\tilde\bL_{(j+1)h}-\bL'_{(j+1)h}
\end{bmatrix}
\Bigg\|_{\LL_2}+
e^{-mh/\gamma}A_j,\label{s8:5}
\end{align}
where in the last inequality we have used the contraction established in
continuous time. For the first norm in the right hand side of the last display,
we use the fact that the considered processes $(\bV',\bL')$ and
$(\tilde\bV,\tilde\bL)$ have the same value at the time instant $jh$. Therefore,
\begin{align}
\|\tilde\bV_{t}-\bV'_t\|_{\LL_2} &=
\bigg\|\int_{jh}^{t} e^{-\gamma(t-s)}\big(\nabla f(\bL'_{s})-\nabla f(\bL'_{jh})
\big)\,ds\bigg\|_{\LL_2}\\
&\le\int_{jh}^{t} \big\|\nabla f(\bL'_{s})-\nabla f(\bL'_{jh})\big\|_{\LL_2}\,ds\\
&\le M\int_{jh}^{t} \big\|\bL'_{s}-\bL'_{jh}\big\|_{\LL_2}\,ds\\
&\le M\int_{jh}^{t} \int_{jh}^s\big\|\bV'_{u}\big\|_{\LL_2}\,du\,ds\\
& = M\int_{jh}^{t} (t-u)\big\|\bV'_{u}\big\|_{\LL_2}\,du\\
& \le M\int_{jh}^{t} (t-u)\,du \max_{u\in[jh,(j+1)h]}\big\|\bV'_{u}\big\|_{\LL_2}\\
& = \frac{M(t-jh)^2}{2}\max_{u\in[jh,(j+1)h]}\big\|\bV'_{u}\big\|_{\LL_2}
\end{align}
and
\begin{align}
\|\tilde\bL_{(j+1)h}-\bL'_{(j+1)h}\|_2 &=
\bigg\|\int_{jh}^{(j+1)h} (\tilde\bV_{t}-\bV'_t)\,dt\bigg\|_2\\
&\le\int_{jh}^{(j+1)h} \|\tilde\bV_{t}-\bV'_t\|_2\,dt\\
&\le \frac{M}{2}\int_{jh}^{(j+1)h} (t-jh)^2\,dt\max_{u\in[jh,(j+1)h]}\big\|\bV'_{u}\big\|_{\LL_2}\\
&\le \frac{Mh^3}{6}\max_{u\in[jh,(j+1)h]}\big\|\bV'_{u}\big\|_{\LL_2}.
\end{align}
\begin{lemma}
For every $u\in[jh,(j+1)h]$, we have
$$
\|\bV'_u\|_{\LL_2}\le \sqrt{p}+A_j.
$$
\end{lemma}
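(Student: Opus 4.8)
The plan is to bound $\|\bV'_u\|_{\LL_2}$ by comparing the auxiliary process $(\bV',\bL')$ started at $(\tilde\bV_{jh},\tilde\bL_{jh})$ with a reference process started from the invariant distribution, and then using the fact that under the invariant distribution the velocity component is a standard Gaussian, so its $\LL_2$-norm is exactly $\sqrt p$. Concretely, first I would write $\bV'_u$ explicitly via the Duhamel/variation-of-constants formula for the Ornstein–Uhlenbeck-type dynamics on $[jh,u]$, and observe that the stochastic integral part has the same law as the velocity marginal at time $u-jh$ of a kinetic Langevin diffusion started at the origin. The cleanest route, however, is to introduce a further coupling: let $(\bV^*,\bL^*)$ be a stationary kinetic Langevin diffusion (so $(\bV^*_t,\bL^*_t)\sim p_*$ for all $t$) driven by the same Brownian motion $\bW$, coupled synchronously with $(\bV',\bL')$, and chosen so that at time $jh$ the pair $(\bL'_{jh},\bL^*_{jh})$ realizes the optimal $W_2$-coupling between the law of $\tilde\bL_{jh}$ and $\pi$ (with $\bV'_{jh}=\bV^*_{jh}$, possible since both velocity marginals are $\mathcal N(0,\bfI_p)$).

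The second step is to apply the triangle inequality in $\LL_2$: $\|\bV'_u\|_{\LL_2}\le \|\bV^*_u\|_{\LL_2}+\|\bV'_u-\bV^*_u\|_{\LL_2}$. The first term equals $\sqrt p$ by stationarity. For the second term, I would use the continuous-time contraction machinery already developed in \Cref{prop:1} (with $v=0$): the difference process $(\bV'_t-\bV^*_t,\bL'_t-\bL^*_t)$ satisfies exactly the ODE \eqref{ODE2}, so the change of variables by $\bfP^{-1}$ applies verbatim, giving that $\bigl\|\bfP^{-1}[\bV'_u-\bV^*_u;\,\bL'_u-\bL^*_u]\bigr\|_{\LL_2}$ is non-increasing in $u$ on $[jh,u]$ (more precisely decays at rate $e^{-m(u-jh)/\gamma}$), hence is at most its value at $u=jh$, which is $A_j$ — indeed at time $jh$ we have $(\bV'_{jh},\bL'_{jh})=(\tilde\bV_{jh},\tilde\bL_{jh})$ and $\bfE[\|\bL'_{jh}-\bL^*_{jh}\|_2^2]=W_2^2(\text{law}(\tilde\bL_{jh}),\pi)$, which one can relate to $A_j$ — and then recovering $\|\bV'_u-\bV^*_u\|_{\LL_2}$ from the $\bfP^{-1}$-transformed norm via the bounded inverse $\bfP$. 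There is a small bookkeeping subtlety here: $A_j$ is defined through the coupling of $(\tilde\bV,\tilde\bL)$ with the genuine process $(\bV,\bL)$, not with $(\bV^*,\bL^*)$; one reconciles this by noting $\bL_{jh}\sim\pi$ marginally (since the true process is started from $\mu^*$, which is stationary), so the optimal coupling at time $jh$ can be taken to be exactly the $(\tilde\bL_{jh},\bL_{jh})$ pair, identifying $(\bV^*,\bL^*)$ on $[jh,(j+1)h]$ with the restriction of $(\bV,\bL)$.

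The main obstacle I anticipate is precisely this last identification/reconciliation step — making sure the reference stationary process used to extract the $\sqrt p$ can be taken to be the \emph{same} process $(\bV,\bL)$ appearing in the definition of $A_j$, rather than an independent stationary copy, so that the bound reads cleanly as $\sqrt p + A_j$ with no extra cross terms. Once one accepts that $\bL_{jh}\sim\pi$ and $\bV_{jh}\sim\mathcal N(0,\bfI_p)$ with $\bV_{jh}$ drawn jointly so that $\bV'_{jh}=\tilde\bV_{jh}=\bV_{jh}$ (which is built into the synchronous coupling construction of \Cref{sec:proofKLMC}), the pieces fit together. A minor secondary point is checking that $\|\bfP^{-1}\bv\|_2\ge\|\bv\|_2$ componentwise-in-the-right-sense, i.e.\ that one can pass from the transformed norm back to $\|\bV'_u-\bV^*_u\|_{\LL_2}$ without losing a constant larger than what the statement allows; inspecting the explicit $\bfP$, the top block of $\bfP^{-1}[\bv;\bx]$ is $\bv+\gamma\bx$ and the bottom is $-\bv$, so $\|\bV'_u-\bV^*_u\|_{\LL_2}$ is literally the norm of the bottom block, hence bounded by the full transformed norm, closing the argument.
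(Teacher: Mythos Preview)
Your core strategy --- compare $\bV'_u$ to the velocity of the stationary process, use the contraction of the $\bfP^{-1}$-transformed norm on $[jh,u]$, and recover $\|\bV'_u-\bV_u\|_{\LL_2}$ as the norm of the bottom block of $\bfP^{-1}[\,\cdot\,;\,\cdot\,]$ --- is exactly the paper's proof, once you identify your reference process $(\bV^*,\bL^*)$ with the already-present process $(\bV,\bL)$ (which is stationary because it is started from $\mu^*$). The paper does this directly, without the detour through an abstract stationary copy or through Wasserstein distances at time $jh$.

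That detour, however, leads you to two incorrect statements that you should drop. First, $\tilde\bV_{jh}$ is \emph{not} distributed as $\mathcal N(\mathbf 0,\bfI_p)$ for $j\ge 1$: the discretized dynamics do not preserve the Gaussian velocity marginal, so ``$\bV'_{jh}=\bV^*_{jh}$, possible since both velocity marginals are $\mathcal N(0,\bfI_p)$'' is false. Second, it is \emph{not} true that $\tilde\bV_{jh}=\bV_{jh}$ for $j\ge1$; the synchronous coupling only gives $\tilde\bV_0=\bV_0$, and thereafter the velocities drift apart. Neither claim is needed. The quantity $A_j$ is \emph{defined} as $\big\|\bfP^{-1}[(\tilde\bV_{jh}-\bV_{jh});(\tilde\bL_{jh}-\bL_{jh})]\big\|_{\LL_2}$, and since $(\bV'_{jh},\bL'_{jh})=(\tilde\bV_{jh},\tilde\bL_{jh})$ by construction of $(\bV',\bL')$, the $\bfP^{-1}$-transformed norm of $(\bV'-\bV,\bL'-\bL)$ at time $jh$ is \emph{literally} $A_j$, with no coupling optimality and no velocity equality required. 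The coupling $(\tilde\bL_{jh},\bL_{jh})$ is also not, in general, $W_2$-optimal between the law of $\tilde\bL_{jh}$ and $\pi$, but this is irrelevant for the same reason.
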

\begin{proof}
We have
\begin{align}
\|\bV'_u\|_{\LL_2}
	& = \|\bV_u\|_{\LL_2}  + \|\bV'_u-\bV_u\|_{\LL_2} \\
	& = \sqrt{p} + \|[\bfI_p,\ \mathbf 0_p]\bfP\bfP^{-1}[(\bV'_u-\bV_u)^\top,
	(\bL'_u-\bL_u)^\top]\|_{\LL_2}\\
	&\le \sqrt{p}+ \|[\bfI_p,\ \mathbf 0_p]\bfP\|\cdot \|\bfP^{-1}[(\bV'_u-\bV_u)^\top,
	(\bL'_u-\bL_u)^\top]\\
	&\le \sqrt{p}+ \|[\bfI_p,\ \mathbf 0_p]\bfP\|\cdot \|\bfP^{-1}[(\bV'_{jh}-
	\bV_{jh})^\top, (\bL'_{jh}-\bL_{jh})^\top]\|_{\LL_2}\\
	&= \sqrt{p}+ \|[\bfI_p,\ \mathbf 0_p]\bfP\|\cdot A_j.
\end{align}
Recall that
$$
\bfP =
\frac1{\gamma}
\begin{bmatrix}
\mathbf 0_{p\times p} & -\gamma\bfI_p\\
\bfI_p & \bfI_p
\end{bmatrix},
$$
which implies that $\|[\bfI_p,\ \mathbf 0_p]\bfP\| =  1$.
This completes the proof of the lemma.
\end{proof}
From this lemma and previous inequalities, we infer that
\begin{align}
&\Bigg\|\bfP^{-1}
\begin{bmatrix}
\tilde\bV_{(j+1)h}-\bV'_{(j+1)h}\\
\tilde\bL_{(j+1)h}-\bL'_{(j+1)h}
\end{bmatrix}
\Bigg\|_{\LL_2}\\
	&\qquad\le \left\{\left(\|\tilde\bV_{(j+1)h}-\bV'_{(j+1)h}\|_{\LL_2} +
	\gamma\,\|\tilde\bL_{(j+1)h}-\bL'_{(j+1)h}\|_{\LL_2}\right)^2+ \|\tilde\bV_{(j+1)h}-\bV'_{(j+1)h}\|_{\LL_2}^2\right\}^{1/2}\\
	&\qquad\le \left\{\Big(1+\frac{\gamma h}{3}\Big)^2+1\right\}^{1/2} \frac{Mh^2}{2}
	\big(\sqrt{p}+A_j\big).
\end{align}
Choosing $h\le 1/(4\gamma)$, we arrive at
$$
\Bigg\|\bfP^{-1}
\begin{bmatrix}
\tilde\bV_{(j+1)h}-\bV'_{(j+1)h}\\
\tilde\bL_{(j+1)h}-\bL'_{(j+1)h}
\end{bmatrix}
\Bigg\|_{\LL_2}
	\le 0.75\,Mh^2\big(\sqrt{p}+A_j\big).
$$
Combining this inequality and \eqref{s8:5}, for every $h\le m/(4\gamma M)$,
we get
\begin{align}
A_{j+1} &\le 0.75\,Mh^2\big(\sqrt{p}+A_j\big) +
e^{-hm/\gamma} A_j\label{s8:6}\\
& = 0.75 Mh^2\sqrt{p} +
(e^{-hm/\gamma} +0.75\,Mh^2)A_j.\label{s8:7}
\end{align}
Using the inequality $e^{-x}\le 1-x+\frac12 x^2$, we can derive from \eqref{s8:7} that
\begin{align}
A_{j+1} & \le 0.75 Mh^2\sqrt{p} + \bigg(1-\frac{hm}{\gamma} +
\frac{h^2m^2}{2\gamma^2} +0.75\,Mh^2\bigg)A_j\\
& \le 0.75Mh^2\sqrt{p} +\Big(1-\frac{0.75mh}{\gamma}\Big)A_j.
\end{align}
Unfolding this recursive inequality, we arrive at
\begin{align}
A_{k}
& \le \frac{Mh\gamma\sqrt{p}}{m} +
\Big(1-\frac{0.75mh}{\gamma}\Big)^kA_0.
\end{align}
Finally, one easily checks that $A_0 = \gamma W_2(\nu_0,\pi)$ and
\begin{align}
\|\tilde\bL_{kh}-\bL_{kh}\|_{\LL_2}
		&\le \|[ \mathbf 0_{p\times p}\ \bfI_p]\bfP\| A_k = \gamma^{-1}\sqrt{2} A_k.
\end{align}
Putting all these pieces together, we arrive at
\begin{align}
W_2(\nu_k,\pi) &\le \|\tilde\bL_{kh}-\bL_{kh}\|_{\LL_2} \\
		&\le \gamma^{-1}\sqrt{2} A_k\\
		&\le \frac{Mh\sqrt{2p}}{m} + \sqrt{2}
			\Big(1-\frac{0.75mh}{\gamma}\Big)^k(A_0/\gamma)\\
		&= \frac{Mh\sqrt{2p}}{m} + \sqrt{2}
			\Big(1-\frac{0.75mh}{\gamma}\Big)^kW_2(\nu_0,\pi),
\end{align}
and the claim of \Cref{th:3} follows.

\section{Proofs for the second-order discretization of the kinetic Langevin diffusion}

We start this section by providing some explanations on the definition
of the KLMC2 algorithm. We turn then to the proof of \Cref{th:4}.

\subsection{Explanations on the origin of the KLMC2 algorithm}\label{ssec:expKLMC2}

Recall that the kinetic diffusion is given by the equation
\begin{align}\label{Kinetic}
    d
		\begin{bmatrix}
		\bV_t\\
		\bL_t
		\end{bmatrix}
		&=
		\begin{bmatrix}
		-(\gamma \bV_t + \nabla f(\bL_t))\\
		\bV_t
		\end{bmatrix}
		\,dt+\sqrt{2\gamma}
		\begin{bmatrix}
		\bfI_p\\
		\mathbf 0_{p\times p}
		\end{bmatrix}
		\,d\bW_t.
\end{align}
From \eqref{Kinetic}, by integration by parts, we can deduce that
\begin{align}
e^{\gamma t}\bV_t
		&= \bV_0 + \int_0^t e^{\gamma s}\,d\bV_s+\gamma\int_0^t e^{\gamma s}\bV_s\,ds
		\\
		&= \bV_0 - \int_0^t e^{\gamma s}\nabla f(\bL_s)\,ds+\sqrt{2\gamma }
		\int_0^t e^{\gamma s}\,d\bW_s.
\end{align}
Therefore, we have
\begin{align}
\bV_t
		&= e^{-\gamma t}\bV_0 -  \int_0^t e^{-\gamma (t-s)} \nabla f(\bL_s)\,ds+
		\sqrt{2\gamma}\int_0^t e^{-\gamma (t-s)}\,d\bW_s,\label{Vt}\\
\bL_t &= \bL_0 + \int_0^t \bV_s\,ds.		\label{Lt}
\end{align}
\begin{lemma}
For every $\gamma>0$ and $t>0$, we have for any $k,j\in\mathbb{N}$
$$
\varphi_{k+1}(t)=\int_0^t\varphi_k(s)ds,\qquad\varphi_{k+j+1}(t)=\int_0^t \psi_k(s)\psi_j(t-s)ds
$$
\end{lemma}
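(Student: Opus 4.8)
The plan is to recognize both equalities as instances of the associativity and commutativity of the causal convolution $(f\ast g)(t)=\int_0^t f(s)\,g(t-s)\,ds$ on $[0,\infty)$, so that the whole proof reduces to a couple of Fubini interchanges over bounded simplices with continuous integrands.

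First I would record two facts read directly off the definitions. Since $\varphi_{k+1}(t)=\int_0^t e^{-\gamma(t-s)}\psi_k(s)\,ds$ and $\psi_0(s)=e^{-\gamma s}$, we have $\varphi_{k+1}=\psi_0\ast\psi_k$ for every $k\ge 0$. And since $\psi_{m+1}(t)=\int_0^t\psi_m(s)\,ds=\int_0^t \mathbf 1\cdot\psi_m(t-s)\,ds$ (change of variable $s\mapsto t-s$), we have $\psi_{m+1}=\mathbf 1\ast\psi_m$, where $\mathbf 1$ denotes the constant function $1$; iterating, $\psi_j=\mathbf 1^{\ast j}\ast\psi_0$ and $\psi_{k+j}=\mathbf 1^{\ast j}\ast\psi_k$.

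For the first identity, which is meaningful for $k\ge 1$ since $\varphi$ carries no index $0$, I would compute $\int_0^t\varphi_k(s)\,ds=\mathbf 1\ast\varphi_k=\mathbf 1\ast\psi_0\ast\psi_{k-1}=\psi_0\ast(\mathbf 1\ast\psi_{k-1})=\psi_0\ast\psi_k=\varphi_{k+1}$. Spelled out without the convolution shorthand, the only nontrivial step is a single swap of the order of integration: $\int_0^t\int_0^s e^{-\gamma(s-r)}\psi_{k-1}(r)\,dr\,ds=\int_0^t\psi_{k-1}(r)\big(\int_r^t e^{-\gamma(s-r)}\,ds\big)dr=\int_0^t\psi_{k-1}(r)\,\tfrac{1-e^{-\gamma(t-r)}}{\gamma}\,dr$, and expanding $\varphi_{k+1}(t)$ through $\psi_k(s)=\int_0^s\psi_{k-1}(r)\,dr$ and swapping again yields exactly the same expression.

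For the second identity I would then write $\psi_k\ast\psi_j=\psi_k\ast\mathbf 1^{\ast j}\ast\psi_0=(\mathbf 1^{\ast j}\ast\psi_k)\ast\psi_0=\psi_{k+j}\ast\psi_0=\psi_0\ast\psi_{k+j}=\varphi_{k+j+1}$. An equivalent argument avoiding convolution algebra altogether is induction on $j$: the case $j=0$ is precisely the definition of $\varphi_{k+1}$, and the passage from $j$ to $j+1$ uses $\psi_{j+1}(t-s)=\int_s^t\psi_j(u-s)\,du$, a Fubini swap giving $\int_0^t\psi_k(s)\psi_{j+1}(t-s)\,ds=\int_0^t\big(\int_0^u\psi_k(s)\psi_j(u-s)\,ds\big)du$, the inductive hypothesis, and finally the first identity applied with index $k+j+1\ge 1$. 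There is no genuine obstacle in this lemma; the only points deserving a moment's attention are the index bookkeeping --- $\varphi$ starts at index $1$, so the first identity is read for $k\ge 1$ and the base case $j=0$ is the right starting point for the induction --- and the fact that every interchange of integrals is over a compact triangle with a continuous integrand, so Fubini applies with nothing to check.
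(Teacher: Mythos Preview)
Your proof is correct and follows essentially the same route as the paper: a Fubini swap and change of variable for the first identity, and an induction on $j$ (with base case $j=0$ given by the definition of $\varphi_{k+1}$) for the second. Your convolution-algebra framing $\varphi_{k+1}=\psi_0\ast\psi_k$, $\psi_{m+1}=\mathbf 1\ast\psi_m$ is a clean way to package these same Fubini manipulations, but the underlying computations are identical to the paper's.
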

\begin{proof}Fubini's Theorem and a change of variables yield
\begin{align}
\int_0^t\varphi_k(s)ds&=\int_0^t\int_0^se^{-\gamma(s-r)}\psi_{k-1}(r)drds\\
&=\int_0^t\int_0^{t-r}e^{-\gamma s}\psi_{k-1}(r)dsdr\\
&=\int_0^t\int_0^{t-s}e^{-\gamma s}\psi_{k-1}(r)drds\\
&=\int_0^te^{-\gamma s}\psi_{k}(t-s)ds=\varphi_{k+1}(t).
\end{align}
This is the first claim of the lemma.

The second claim of the lemma is true for $j=0$ and any $k\in\mathbb{N}$ by definition. By induction we get
\begin{align}
\int_0^t\psi_k(s)\psi_j(t-s)ds&=\int_0^t\psi_k(s)\int_0^{t-s}\psi_{j-1}(r)drds\\
&=\int_0^t\int_0^{t-r}\psi_k(s)\psi_{j-1}(r)dsdr\\
&=\int_0^t\psi_{k+1}(t-r)\psi_{j-1}(r)dr\\
&=\int_0^t\psi_{k+j}(r)\psi_0(t-r)dr=\varphi_{k+j+1}(t).
\end{align}
This completes the proof of the lemma.
\end{proof}
If the function $f$ is twice continuously differentiable, then, for small
values of $s$, the value $\nabla f(\bL_s)$ appearing in \eqref{Vt} can be approximated by an affine function of $\bL_s$:
\begin{align}
\nabla f(\bL_s)
	&\approx \nabla f(\bL_0) + \nabla^2 f(\bL_0) (\bL_s-\bL_0)\\
	& = \nabla f(\bL_0) + \nabla^2 f(\bL_0) \int_0^s \bV_w\,dw\\
	& \approx \nabla f(\bL_0) + \psi_1(s)\nabla^2 f(\bL_0) \bV_0 + \sqrt{2\gamma}\,
	\nabla^2f(\bL_0) \int_0^s \psi_1(s-w)\,d\bW_w.\label{approx_grad}
\end{align}
From the above
approximation, we can infer that
\begin{align}\label{approx}
\int_0^t e^{-\gamma (t-s)} \nabla f(\bL_s)\,ds
	&\approx  \psi_1(t)\nabla f(\bL_0) + \varphi_2(t)\nabla^2 f(\bL_0) \bV_0\\
	&\qquad +
	 \sqrt{2\gamma}\, \nabla^2f(\bL_0) \int_0^te^{-\gamma (t-s)}\int_0^s \psi_1(s-w)\,
	d\bW_w\,ds\\
	&= \psi_1(t)\nabla f(\bL_0) + \varphi_2(t)\nabla^2 f(\bL_0) \bV_0+
	\sqrt{2\gamma}\,\nabla^2f(\bL_0) \int_0^t \varphi_2(t-w)\,d\bW_w.
\end{align}
In the last step of the above equation, we have used that
\begin{align}
\int_0^te^{-\gamma (t-s)}\int_0^s \psi_1(s-w)\,d\bW_w\,ds
	& = \int_0^t \int_{w}^t e^{-\gamma (t-s)} \psi_1(s-w)\,ds\,d\bW_w\\
	& = \int_0^t \int_{0}^{t-w} e^{-\gamma (t-w-u)} \psi_1(u)\,du\,d\bW_w\\
	& = \int_0^t \varphi_2(t-w)\,d\bW_w.
\end{align}
Combining the last approximation and the diffusion equation \eqref{Vt},
we arrive at
\begin{align}
   {\bV}_t
				& \approx e^{-\gamma t}{\bV}_0 -  \psi_1(t)\nabla f(\bL_0)
					- \varphi_2(t)\nabla^2 f(\bL_0) \bV_0\\
					&\qquad -\sqrt{2\gamma}\,\nabla^2 f(\bL_0)\int_0^t \varphi_2(t-s)\,d\bW_s+
					\sqrt{2\gamma}\int_0^t e^{-\gamma (t-s)}\,d\bW_s.
\end{align}
This approximation will be used for defining the discretized version of the process $\bV$.
In order to define the discretized version of $\bL$, we will simply use the plug-in approximation of $\bV$, and then integrate.
This leads to
\begin{align}
	\bL_t &= \bL_0 + \int_0^t\bV_s\,ds\\
	&\approx \bL_0 + \psi_1(t)\bV_0 - \psi_2(t)\nabla f(\bL_0)
		- \varphi_3(t)\nabla^2f(\bL_0)\bV_0\\
	&\qquad - \sqrt{2\gamma}\,\nabla^2f(\bL_0)\int_0^t
	\varphi_3(t-w)\,d\bW_w +\sqrt{2\gamma}\int_0^t\psi_1(t-w)\,d\bW_w.
\end{align}
\subsection{Proof of \Cref{th:4}}

Recall that we have defined in \Cref{sec:2ndorderKLMC} the following functions
$$
\varphi_{k+1}(t)=\int_0^te^{-\gamma(t-s)}\psi_k(s)ds,\qquad k\ge 1.
$$
We first evaluate the error of one iteration  of the KLMC2 algorithm.
To this end, we introduce the processes
\begin{multline}
    \Tilde{\bV}_t=e^{-\gamma t}\Tilde{\bV}_0-\left(\psi_1(t)\nabla f(\Tilde{\bL}_0)
		 + \varphi_2(t)\nabla^2f(\Tilde{\bL}_0)\Tilde{\bV}_0\right)\\
		 +\sqrt{2\gamma }\left(\int_0^te^{-\gamma(t-s)}d\bW_s-\nabla^2f(\Tilde{\bL}_0)\int_0^t\varphi_2(t-s)d\bW_s\right)
\end{multline}
and
\begin{multline}
    \Tilde{\bL}_t= \Tilde{\bL}_0+\psi_1(t)\Tilde{\bV}_0-\left(\psi_2(t)\nabla f(\Tilde{\bL}_0)+
		\varphi_3(t)\nabla^2f(\Tilde{\bL}_0)\Tilde{\bV}_0\right)\\
		+\sqrt{2\gamma }\left(\int_0^t\psi_1(t-s)d\bW_s-\nabla^2f(\Tilde{\bL}_0)
		\int_0^t\varphi_3(t-s)d\bW_s\right).
\end{multline}
In what follows, we will use the following matrices to perform a linear
transformation of the space $\RR^{2p}$:
\begin{align}\label{pp-1}
\bfP=\gamma^{-1}\cdot\begin{bmatrix}
      \mathbf 0_{p\times p} & -\gamma \bfI_{p} \\
      \bfI_{p} & \bfI_{p}
\end{bmatrix},\qquad
\bfP^{-1}=\begin{bmatrix}
     \bfI_{p}  & \gamma \bfI_{p} \\
     - \bfI_{p} &  \mathbf 0_{p\times p}
\end{bmatrix}.
\end{align}
We need an auxiliary process, denoted by $(\hat\bV,\hat\bL)$, which
at time 0 coincides with $(\bV,\bL)$ but evolves according to exactly
the same dynamics as $(\tilde\bV,\tilde\bL)$.

\begin{proposition}\label{prop:2}
  Assume that, for some constants $m,M,M_2>0$, the function $f$ is
$m$-strongly convex, its gradient is $M$-Lipschitz, and its Hessian is $M_2$-Lipschitz for the spectral norm. If the parameter $\gamma$ and the step size $t$ of the kinetic Langevin diffusion are such that
  $$t\le\frac{1}{5\gamma},$$
  then
$$
 \left\|\bfP^{-1}\begin{bmatrix}
     \bV_t-\hat{\bV}_t \\
      \bL_t-\hat{\bL}_t
\end{bmatrix}\right\|_{{\LL}_2}\le0.25\times t^3(M_2\sqrt{p^2+2p}+M^{3/2}\sqrt{p}).
$$
\end{proposition}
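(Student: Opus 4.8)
The plan is to bound the one-step discretization error of the KLMC2 scheme by comparing, on the interval $[0,t]$, the exact kinetic Langevin process $(\bV,\bL)$ with the auxiliary process $(\hat\bV,\hat\bL)$, which starts at $(\bV_0,\bL_0)$ but obeys the same affine-drift dynamics as $(\tilde\bV,\tilde\bL)$. Since the two processes share the same Brownian motion, the difference $(\bV_t-\hat\bV_t,\bL_t-\hat\bL_t)$ is driven only by the discrepancy between the true drift term $\int_0^t e^{-\gamma(t-s)}\nabla f(\bL_s)\,ds$ and its second-order Taylor surrogate $\psi_1(t)\nabla f(\bL_0)+\varphi_2(t)\nabla^2f(\bL_0)\bV_0$ (plus the analogous stochastic-integral surrogate). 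Writing out $\nabla f(\bL_s)-\nabla f(\bL_0)-\nabla^2 f(\bL_0)(\bL_s-\bL_0)$ via the integral form of Taylor's theorem, and separately $\nabla^2 f(\bL_0)(\bL_s-\bL_0)-\psi_1(s)\nabla^2 f(\bL_0)\bV_0-\sqrt{2\gamma}\,\nabla^2 f(\bL_0)\int_0^s\psi_1(s-w)\,d\bW_w$, gives two residual terms: one controlled by the Hessian-Lipschitz constant $M_2$ times $\|\bL_s-\bL_0\|^2$, and one controlled by $\|\nabla^2 f(\bL_0)\|\le M$ times the residual of the $\bV$-approximation inside $\bL_s-\bL_0$, which in turn involves $\int_0^s(\nabla f(\bL_w)-\nabla f(\bL_0))\,dw$ and is $O(M)$ times a second moment of displacement.

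The key steps, in order, are: (i) establish moment bounds on the increments of the exact process on $[0,t]$ — namely $\|\bL_s-\bL_0\|_{\LL_2}\lesssim s\,\|\bV_0\|_{\LL_2}+O(s^2)$ and, using that under $(\bV_0,\bL_0)\sim\mu^*$ one has $\|\bV_0\|_{\LL_2}=\sqrt p$, conclude $\|\bL_s-\bL_0\|_{\LL_2}^2\lesssim s^2 p$ on the relevant time scale; this is where the $\sqrt{p^2+2p}$ and $\sqrt p$ factors originate; (ii) insert these into the two residual terms to get pointwise-in-$s$ bounds of the form $\|\text{residual}_1(s)\|_{\LL_2}\lesssim M_2\, s^2 p$ and $\|\text{residual}_2(s)\|_{\LL_2}\lesssim M^{3/2}s^{?}\sqrt p$, keeping careful track of the powers of $s$ so that after the convolutions with $e^{-\gamma(t-s)}$ (for $\bV$) and with $\psi_1(t-s)$ (for $\bL$) one lands on $t^3$; (iii) assemble $\bV_t-\hat\bV_t$ and $\bL_t-\hat\bL_t$ from these, using $\|\varphi_{k+1}(t)\|\le \psi_{k+1}(t)\le t^{k+1}/(k+1)!$ and similar elementary bounds on the functions $\psi_k,\varphi_k$, plus Itô isometry for the stochastic-integral residuals; (iv) apply the fixed linear map $\bfP^{-1}$, whose two rows are $[\bfI_p,\gamma\bfI_p]$ and $[-\bfI_p,\mathbf 0]$, so that $\|\bfP^{-1}[\,\cdot\,]\|_{\LL_2}^2 = \|(\bV_t-\hat\bV_t)+\gamma(\bL_t-\hat\bL_t)\|_{\LL_2}^2 + \|\bV_t-\hat\bV_t\|_{\LL_2}^2$, and bound each piece; (v) collect constants and use the hypothesis $t\le 1/(5\gamma)$ to absorb the higher-order-in-$\gamma t$ terms into the claimed constant $0.25$.

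The main obstacle I expect is step (ii)–(iii): getting the bookkeeping of the powers of $t$ and $\gamma$ exactly right so that the final bound is genuinely $O(t^3)$ with the clean separation $M_2\sqrt{p^2+2p}+M^{3/2}\sqrt p$, rather than a messier mixture. In particular, residual$_2$ contains a factor $M$ from $\|\nabla^2 f(\bL_0)\|$ and then another $\sqrt M$ coming from bounding $\|\nabla f(\bL_w)-\nabla f(\bL_0)\|_{\LL_2}$ by $M\|\bL_w-\bL_0\|_{\LL_2}$ combined with the contraction/growth estimate; arranging the Cauchy–Schwarz and the triangle inequalities so the $M$-powers combine to exactly $M^{3/2}$ and the time exponent to exactly $3$, while the $\gamma$-dependent corrections (from $e^{-\gamma(t-s)}\le 1$ versus the genuine $\gamma t$ terms) stay inside the slack afforded by $t\le 1/(5\gamma)$, is the delicate part. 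The moment bound in step (i) is the other place requiring care, since it must be a bound on the exact process (not the discretized one), so one uses \eqref{Vt}–\eqref{Lt}, the $M$-Lipschitz gradient, and a Grönwall-type argument on $\|\bV_s\|_{\LL_2}$, noting that $\|\bV_s\|_{\LL_2}$ stays close to $\sqrt p$ on $[0,t]$.
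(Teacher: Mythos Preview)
Your decomposition is essentially the paper's: write $\hat\bV_t-\bV_t$ as the integral of the Taylor remainder $(\bfH_s-\nabla^2 f(\bL_0))(\bL_s-\bL_0)$ (your residual~1) plus the term coming from $\nabla^2 f(\bL_0)(\bL_s-\bL_0)$ minus its surrogate (your residual~2), then integrate once more for $\hat\bL_t-\bL_t$ and apply $\bfP^{-1}$. But two key ingredients are misidentified, and your proposed route through Gr\"onwall and Lipschitz estimates would not produce the claimed factors $M^{3/2}\sqrt p$ and $\sqrt{p^2+2p}$ under the sole hypothesis $t\le 1/(5\gamma)$.

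First, residual~2 does \emph{not} involve $\nabla f(\bL_w)-\nabla f(\bL_0)$. After you substitute the exact formula for $\bV_r$ into $\bL_s-\bL_0=\int_0^s\bV_r\,dr$, the $e^{-\gamma r}\bV_0$ piece and the stochastic integral cancel \emph{exactly} against the surrogate (so there is no It\^o-isometry step at all), and what remains is
$-\nabla^2 f(\bL_0)\int_0^t\!\int_0^s\!\int_0^r e^{-\gamma(t-s)}e^{-\gamma(r-w)}\nabla f(\bL_w)\,dw\,dr\,ds$,
with the full gradient. The $M^{3/2}$ then comes from $\|\nabla^2 f(\bL_0)\|\le M$ together with the bound $\|\nabla f(\bL_0)\|_{\LL_2}\le\sqrt{Mp}$ valid for $\bL_0\sim\pi$; no Lipschitz step on the gradient is used. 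Your proposed bound via $M\|\bL_w-\bL_0\|_{\LL_2}$ would instead give $M^2 t^4\sqrt p$ after the triple integral, which cannot be reduced to $M^{3/2}t^3\sqrt p$ without an extra assumption such as $\gamma\ge\sqrt M$ that the proposition does not make.

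Second, the paper uses \emph{stationarity} throughout, not Gr\"onwall: since $(\bV_0,\bL_0)\sim\mu^*$ is the invariant law, $(\bV_r,\bL_r)\sim\mu^*$ for every $r\ge 0$, so $\|\nabla f(\bL_w)\|_{\LL_2}=\|\nabla f(\bL_0)\|_{\LL_2}$ and $\bfE[\|\bV_r\|_2^4]=\bfE[\|\bV_0\|_2^4]=p^2+2p$ exactly, with no growth to control. This fourth Gaussian moment is the source of $\sqrt{p^2+2p}$: residual~1 is bounded in $\LL_2$ by $(M_2/2)\,\bfE[\|\bL_s-\bL_0\|_2^4]^{1/2}$, which after Minkowski's integral inequality becomes $(M_2/2)\big(\int_0^s\bfE[\|\bV_r\|_2^4]^{1/4}dr\big)^2$. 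Your step~(i) invokes only the second moment $\|\bV_0\|_{\LL_2}=\sqrt p$, which is not enough here.
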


\begin{proof}
From the definition of $\bfP^{-1}$, we compute
\begin{align}
    \bigg\|\bfP^{-1}\begin{bmatrix}
     \bV_t-\hat{\bV}_t \\
      \bL_t-\hat{\bL}_t
\end{bmatrix}\bigg\|_{{\LL}_2}=&\left\{\|\bV_t-\hat{\bV}_t+\gamma(\bL_t-\hat{\bL}_t)\|_{{\LL}_2}^2+\|\bV_t-\hat{\bV}_t\|_{{\LL}_2}^2\right\}^{1/2}\\
\le&\left\{\left(\|\bV_t-\hat{\bV}_t\|_{{\LL}_2}+\gamma\|\bL_t-\hat{\bL}_t\|_{{\LL}_2}\right)^2+\|\bV_t-\hat{\bV}_t\|_{{\LL}_2}^2\right\}^{1/2}
\end{align}
where the upper bound follows from Minkowski's inequality. We now give upper bounds for the ${\LL}_2$-norm of processes $\bV-\hat{\bV}$ and $\bL-\hat{\bL}$.
\begin{lemma}
For any time step $t>0$ we have
$$\|\hat{\bV}_t-\bV_t\|_{{\LL}_2}\le\frac{t^3(M_2\sqrt{p^2+2p}+M^{3/2}\sqrt{p})}{6},$$
$$\|\hat{\bL}_t-\bL_t\|_{{\LL}_2}\le\frac{t^4(M_2\sqrt{p^2+2p}+M^{3/2}\sqrt{p})}{24}.
$$
\end{lemma}

\begin{proof}
Recall that $\psi_1(t)=\int_0^t e^{-\gamma(t-s)}ds$, $\psi_2(t)=\int_0^tse^{-\gamma(t-s)}ds$ and
$$
\bV_t=e^{-\gamma t} \bV_0-\int_0^t e^{-\gamma (t-s)}\nabla f(\bL_s)ds+\sqrt{2\gamma}\int_0^te^{-\gamma (t-s)}d\bW_s.
$$
We compute
\begin{multline}
\hat{\bV}_t-\bV_t=
\int_0^te^{-\gamma(t-s)}(\nabla f(\bL_s)-\nabla f(\bL_0))ds
-\varphi_2(t)\nabla^2f(\bL_0) \bV_0\\
-\sqrt{2\gamma}\nabla^2 f(\bL_0)\int_0^t\varphi_2(t-s)d\bW_s.
\end{multline}
 By Taylor's theorem, we have
 $$
 \nabla f(\bL_s)-\nabla f(\bL_0)=\bfH_s\cdot(\bL_s-\bL_0),\qquad\bfH_s\triangleq\int_0^1\nabla^2f(\bL_s+h(\bL_0-\bL_s))dh.
 $$
This yields the following convenient re-writing of the first integral
\begin{multline}
    \int_0^te^{-\gamma(t-s)}(\nabla f(\bL_s)-\nabla f(\bL_0))ds\\
    =\underbrace{\int_0^te^{-\gamma(t-s)}(\bfH_s-\nabla^2 f(\bL_0))(\bL_s-\bL_0)ds}_{\triangleq \bA_t}
    +\underbrace{\nabla^2 f(\bL_0)\int_0^t\int_0^se^{-\gamma(t-s)}\bV_r drds}_{\triangleq \bC_t}.
\end{multline}
Now, we replace $\bV_r$ by its explicit expression
$$
\bV_r
		= e^{-\gamma r}\bV_0 -  \int_0^r e^{-\gamma (r-w)} \nabla f(\bL_w)\,dw+
		\sqrt{2\gamma}\int_0^r e^{-\gamma (r-w)}\,d\bW_w.
$$
By integrating twice, we compute
\begin{multline}
    \bC_t= \varphi_2(t)\nabla^2f(\bL_0) \bV_0+\sqrt{2\gamma}\nabla^2 f(\bL_0)\int_0^t\varphi_2(t-s)d\bW_s\\-\underbrace{\nabla^2 f(\bL_0)\int_0^t\int_0^s\int_0^re^{-\gamma (t-s)}e^{-\gamma (r-w)}\nabla f(\bL_w)dwdrds}_{\triangleq \bB_t}
\end{multline}
Summing the two expressions allows some terms to cancel out leading to
$$
\hat{\bV}_t-\bV_t=\bA_t-\bB_t,
$$
where
\begin{align}
    \bA_t&=\int_0^t\int_0^1e^{-\gamma (t-s)}\left(\nabla^2f(\bL_s+h(\bL_0-\bL_s))-\nabla^2f(\bL_0)\right)\cdot(\bL_s-\bL_0)dhds,\\
   \bB_t&=\nabla^2 f(\bL_0)\int_0^t\int_0^s\int_0^re^{-\gamma (t-s)}e^{-\gamma (r-w)}\nabla f(\bL_w)dwdrds.
\end{align}
We now control ${\LL}_2$-norm of processes $\bA_t$ and $\bB_t$. Bounding $e^{-\gamma (t-s)}$ by one, Minkowski's inequality in its integral version and the Lipschitz assumption on the Hessian yield
\begin{align}
    \|\bA_t\|_{{\LL}_2}&\le\int_0^t\int_0^1\bfE\Big[\|\left(\nabla^2f(\bL_s+h(\bL_0-\bL_s))-\nabla^2f(\bL_0)\right)\cdot(\bL_s-\bL_0)\|_2^2\Big]^{1/2}dhds\\
    &\le M_2\int_0^t\int_0^1\bfE\Big[(1-h)^2\|\bL_s-\bL_0\|_2^4\Big]^{1/2}dhds\\
    &=\frac{M_2}{2}\int_0^t\bigg\{\bfE\left[\left\|\int_0^s\bV_r dr\right\|_{2}^4\right]^{1/4}\bigg\}^2ds\\
		&\le\frac{M_2}{2}\int_0^t\bigg\{\int_0^s\bfE\left[\left\|\bV_r \right\|_{2}^4\right]^{1/4}dr\bigg\}^2ds\\
		&=\frac{M_2}{2}\int_0^t\bigg\{\int_0^s\bfE\left[\left\|\bV_0 \right\|_{2}^4\right]^{1/4}dr\bigg\}^2ds\\
    &=\frac{M_2t^3}{6}\bfE\left[\|\bV_0\|_2^4\right]^{1/2},
\end{align}
where we have used the stationarity of the process $\bV_r$.
Since $\bV_0$ is standard Gaussian, we get $\bfE\left[\|\bV_0\|_2^4\right]=
p^2+2p$.

In the same way, Minkowski's inequality in its integral version yields
\begin{align}
    \|\bB_t\|_{{\LL}_2}&\le\int_0^t\int_0^s\int_0^r\|\nabla^2 f(\bL_0)\nabla f(\bL_w)\|_{{\LL}_2}dwdrds\\
    &\le \int_0^t\int_0^s\int_0^rM\|\nabla f(\bL_w)\|_{{\LL}_2}dwdrds\\
    &=M\|\nabla f(\bL_0)\|_{{\LL}_2}\int_0^t\int_0^s\int_0^rdwdrds\\
    &=\frac{t^3M}{6}\|\nabla f(\bL_0)\|_{{\LL}_2},
\end{align}
where last equalities follow from the stationarity of $\bL_w$.
Since $\bL_0\sim\pi$ \citep[Lemma 2]{DalalyanColt} ensures that $\|\nabla f(\bL_0)\|_{{\LL}_2}\le\sqrt{Mp}$, and the first claim of the lemma follows.

The bound for process $\bL-\hat{\bL}$ follows from Minkowski's inequality combined with the bound just proven:
\begin{align}
    \|\hat{\bL}_t-\bL_t\|_{{\LL}_2}&\le\int_0^t\|\hat{\bV}_s -\bV_s\|_{{\LL}_2}ds\\
    &\le\int_0^t\left(\frac{t^3(M_2\sqrt{p^2+2p}+M^{3/2}\sqrt{p})}{6}\right)ds\\
    &=\frac{t^4(M_2\sqrt{p^2+2p}+M^{3/2}\sqrt{p})}{24}.
\end{align}
This completes the proof of the lemma.
\end{proof}
The claim of the proposition follows from the assumption $\gamma t\le1/5$
and that
$$
\sqrt{\left(\frac16+\frac{1}{5\times24}\right)^2+\left(\frac16\right)^2}\le0.25
$$
\end{proof}

The next, perhaps the most important, step of the proof is to assess the distance
between the random vectors $(\hat\bV_t,\hat\bL_t)$ and $(\tilde\bV_t,\tilde\bL_t)$.
\begin{proposition}\label{prop:3}
 Assume that, for some constants $m,M,M_2>0$, the function $f$ is
$m$-strongly convex, its gradient is $M$-Lipschitz, and its Hessian is $M_2$-Lipschitz for the spectral norm. If the parameter $\gamma$ and the step size $t$ of the kinetic Langevin diffusion satisfy the inequalities
$$
\gamma^2\ge m+M,\qquad t\le\frac{1}{5\gamma\varkappa},
$$
then, for  the $(2p)\times (2p)$ matrix $\bfP$ defined in \eqref{pp-1}, and for every
$a\ge 5p$, it holds
  \begin{align}
       \left\|\bfP^{-1}\begin{bmatrix}
     \hat{\bV}_t - \Tilde\bV_t \\
      \hat{\bL}_t - \Tilde\bL_t
\end{bmatrix}\right\|_{{\LL}_2}
&\le
\left(1-\frac{mt}{2\gamma}+\frac{M_2\sqrt{a}\,t^2}{\gamma}\right)
\left\|\bfP^{-1}\begin{bmatrix}
     \bV_0-\Tilde{\bV}_0  \\
      \bL_0-\Tilde{\bL}_0
\end{bmatrix}\right\|_{{\LL}_2}\\
&\qquad + \sqrt{2}\,t^2(M-m)e^{-(a-p)/8}.
  \end{align}
\end{proposition}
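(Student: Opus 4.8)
The plan is to reduce the one-step error to a genuine linear contraction plus a curvature correction. First I would set $\delta\bV_0=\bV_0-\tilde\bV_0$, $\delta\bL_0=\bL_0-\tilde\bL_0$ (recall $(\hat\bV_0,\hat\bL_0)=(\bV_0,\bL_0)$) and subtract the defining relations of $(\hat\bV_t,\hat\bL_t)$ and $(\tilde\bV_t,\tilde\bL_t)$; because both processes are driven by the same Brownian motion, the pure-Brownian terms $\sqrt{2\gamma}\int_0^t e^{-\gamma(t-s)}d\bW_s$ and $\sqrt{2\gamma}\int_0^t\psi_1(t-s)d\bW_s$ cancel. Writing $\nabla f(\bL_0)-\nabla f(\tilde\bL_0)=\bfH\delta\bL_0$ with $\bfH=\int_0^1\nabla^2f(\tilde\bL_0+s\delta\bL_0)\,ds$ (so $m\bfI_p\preceq\bfH\preceq M\bfI_p$), and splitting $\nabla^2f(\bL_0)\bV_0-\nabla^2f(\tilde\bL_0)\tilde\bV_0=\nabla^2f(\tilde\bL_0)\delta\bV_0+\mathbf G\bV_0$ with $\mathbf G=\nabla^2f(\bL_0)-\nabla^2f(\tilde\bL_0)$ (so $\|\mathbf G\|\le(M-m)\wedge(M_2\|\delta\bL_0\|_2)$), one obtains $\hat\bV_t-\tilde\bV_t$ as a linear function of $(\delta\bV_0,\delta\bL_0)$ plus $-\varphi_2(t)\mathbf G\bV_0-\sqrt{2\gamma}\,\mathbf G\int_0^t\varphi_2(t-s)d\bW_s$, and $\hat\bL_t-\tilde\bL_t$ similarly with $\varphi_3$ in place of $\varphi_2$.

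Next I would apply the transformation $\bfP^{-1}$ of \eqref{pp-1} and exploit the elementary identities $\psi_0(t)+\gamma\psi_1(t)=1$, $\psi_1(t)+\gamma\psi_2(t)=t$ and $\varphi_2(t)+\gamma\varphi_3(t)=\psi_2(t)$, all immediate from the recursions defining the $\psi_k,\varphi_k$. They reveal that $\bfP^{-1}[\hat\bV_t-\tilde\bV_t;\hat\bL_t-\tilde\bL_t]$ equals $\mathbf T_t\,\bfP^{-1}[\delta\bV_0;\delta\bL_0]+\mathbf E_t$, where $\mathbf T_t$ is an explicit $(2p)\times(2p)$ matrix assembled from $e^{-\gamma t},\psi_1(t),\psi_2(t),\varphi_2(t),\bfH$ and $\nabla^2f(\tilde\bL_0)$, while $\mathbf E_t=\big[-\psi_2(t)\mathbf G\bV_0-\sqrt{2\gamma}\,\mathbf G\int_0^t\psi_2(t-s)d\bW_s;\ \varphi_2(t)\mathbf G\bV_0+\sqrt{2\gamma}\,\mathbf G\int_0^t\varphi_2(t-s)d\bW_s\big]$. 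The claimed bound then follows from the triangle inequality once $\|\mathbf T_t\,\bfP^{-1}[\delta\bV_0;\delta\bL_0]\|_{\LL_2}$ and $\|\mathbf E_t\|_{\LL_2}$ are bounded separately.

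For the linear part the target is the deterministic estimate $\|\mathbf T_t\|\le1-\tfrac{mt}{2\gamma}$. I would write $\mathbf T_t=e^{t\bfN}+\mathbf R_t$, where $\bfN=\bfP^{-1}\bfM\bfP$ with $\bfM=\begin{bmatrix}-\gamma\bfI_p&-\bfH\\\bfI_p&\mathbf 0_{p\times p}\end{bmatrix}$ the frozen-Hessian generator of the difference process (in $\bfP$-coordinates). The energy computation in the proof of \Cref{prop:1} for $v=0$ gives $\bfN+\bfN^\top\preceq-\tfrac{2m}{\gamma}\bfI_{2p}$ whenever $\gamma^2\ge m+M$, hence $\|e^{t\bfN}\|\le e^{-mt/\gamma}$. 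A short Taylor expansion, again using the three identities, should show that $\mathbf T_t$ and $e^{t\bfN}$ agree to first order in $t$ and that the order-$t^2$ discrepancy reduces to $\tfrac{t^2}2(\bfH-\nabla^2f(\tilde\bL_0))$, which by the crude bound $\|\bfH-\nabla^2f(\tilde\bL_0)\|\le M-m$ yields $\|\mathbf R_t\|\le C\,t^2M$ for an absolute constant $C$; combining with $e^{-x}\le1-x+x^2$ and the hypothesis $t\le\tfrac1{5\gamma\varkappa}=\tfrac{m}{5\gamma M}$ gives $\|\mathbf T_t\|\le1-\tfrac{mt}{2\gamma}$, so that $\|\mathbf T_t\,\bfP^{-1}[\delta\bV_0;\delta\bL_0]\|_{\LL_2}\le(1-\tfrac{mt}{2\gamma})\|\bfP^{-1}[\delta\bV_0;\delta\bL_0]\|_{\LL_2}$.

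For the curvature term $\mathbf E_t$, the plan is to truncate on $\Omega_a=\{\|\bV_0\|_2^2\le a\}$, which is the place where one uses that in the intended application $\bV_0=\hat\bV_0$ is the velocity of the stationary continuous kinetic diffusion, hence $\bV_0\sim\mathcal N(\mathbf 0_p,\bfI_p)$. On $\Omega_a$, one bounds $\|\mathbf G\bV_0\|_2\le M_2\|\delta\bL_0\|_2\sqrt a$ and uses $\|\delta\bL_0\|_2\le\tfrac{\sqrt2}{\gamma}\|\bfP^{-1}[\delta\bV_0;\delta\bL_0]\|_2$ together with $\psi_2(t)^2+\varphi_2(t)^2\le t^4/2$ to obtain the multiplicative term $\tfrac{M_2\sqrt a\,t^2}{\gamma}\|\bfP^{-1}[\delta\bV_0;\delta\bL_0]\|_{\LL_2}$; on $\Omega_a^{c}$ one uses the crude $\|\mathbf G\|\le M-m$ and a chi-squared tail bound (so $\prob(\Omega_a^{c})\le e^{-(a-p)/4}$ and $\EE[\|\bV_0\|_2^2\indic_{\Omega_a^{c}}]$ is controlled) to obtain the additive term $\sqrt2\,t^2(M-m)e^{-(a-p)/8}$; and the stochastic pieces $\sqrt{2\gamma}\,\mathbf G\int_0^t\psi_2(t-s)d\bW_s$ and its $\varphi_2$-counterpart are bounded without truncation, using that the increment is independent of the time-$0$ data, so their conditional second moment equals $2\gamma\|\mathbf G\|_F^2\int_0^t\psi_2(u)^2du\le 2\gamma p\,M_2^2\|\delta\bL_0\|_2^2\,t^5/20$, which with $p\le a/5$ and $t\le\tfrac1{5\gamma\varkappa}$ is absorbed into $\tfrac{M_2\sqrt a\,t^2}{\gamma}\|\bfP^{-1}[\delta\bV_0;\delta\bL_0]\|_{\LL_2}$. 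Adding the three contributions gives the stated inequality. I expect the main obstacle to lie in this last step: setting up the truncation so that all constants land exactly as in the statement (the choice $a\ge5p$, the chi-squared tail, and keeping $M_2\sqrt a\,t^2/\gamma$ a true lower-order perturbation of $mt/(2\gamma)$); a smaller, purely mechanical point is checking that $\mathbf R_t$ is $O(t^2)$ rather than just $O(t)$ in operator norm, which hinges on the cancellations among the first-order Taylor coefficients supplied by the $\psi_k,\varphi_k$ identities.
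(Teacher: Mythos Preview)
Your decomposition into a linear part $\mathbf T_t\,\bfP^{-1}[\delta\bV_0;\delta\bL_0]$ plus a curvature remainder $\mathbf E_t$ is the same split the paper uses, and your treatment of $\mathbf E_t$ via the threshold $a\ge 5p$ and a $\chi^2$ tail bound on $\|\bV_0\|_2^2$ is exactly the argument of the paper's Step~3 (the paper phrases it through $\|\bV_0\|_2^2\le a+(\|\bV_0\|_2^2-a)_+$ rather than an event $\Omega_a$, but the content is identical). You also keep track of the stochastic pieces $\sqrt{2\gamma}\,\mathbf G\int_0^t\varphi_2(t-s)\,d\bW_s$ and their $\varphi_3$-analogue, which the paper's decomposition simply omits; your plan to absorb them into the $M_2\sqrt{a}\,t^2/\gamma$ term is sound.

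Where your route genuinely diverges is in the contraction bound for the linear part. The paper does \emph{not} compare $\mathbf T_t$ to the matrix exponential $e^{t\bfN}$; instead it writes $\mathbf T_t=\bfI_{2p}-\psi_1(t)\bfQ_0-\bfN_0(t)$ with $\bfQ_0=-\bfN$ and bounds the two pieces directly: an explicit spectral computation gives $\|\bfI_{2p}-\psi_1(t)\bfQ_0\|\le 1-\psi_1(t)m/\gamma+O(\psi_1(t)^2M)$, and $\|\bfN_0(t)\|\le\sqrt{2}\,Mt^2$ follows from elementary bounds on $\psi_2,\varphi_2,\varphi_2+\gamma\varphi_3$. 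Summing and using $t\le 1/(5\gamma\varkappa)$ gives $1-mt/(2\gamma)$. This is purely algebraic and works for every $\gamma\ge\sqrt{m+M}$.

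Your exponential detour can be made to work, but not via the mechanism you describe. The claim that the order-$t^2$ discrepancy in $\mathbf R_t=\mathbf T_t-e^{t\bfN}$ ``reduces to $\tfrac{t^2}{2}(\bfH-\nabla^2f(\tilde\bL_0))$'' is incorrect: the KLMC2 update is not the exact semigroup even for quadratic $f$, so $\mathbf T_t\ne e^{t\bfN}$ when $\bfH=\nabla^2f(\tilde\bL_0)$. Concretely, $(t-\psi_1(t))\bfQ_0-\tfrac{t^2}{2}\bfQ_0^2=\tfrac{t^2}{2}\mathrm{diag}(\bfH,\bfH)+O(t^3)$ (the $\gamma^2$ contributions cancel), and subtracting $\bfN_0(t)$ leaves a block matrix whose entries are of size $M$, not $M-m$. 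So $\|\mathbf R_t\|\le C\,t^2M$ is attainable, but only after noticing this cancellation and then controlling the full tail $\sum_{k\ge3}t^k\bfQ_0^k/k!$ via $t\|\bfQ_0\|\lesssim t\gamma\le 1/(5\varkappa)$. The paper's direct spectral estimate avoids all of this; if you want to keep the exponential comparison, you should state the cancellation explicitly rather than appeal to $\bfH-\nabla^2f(\tilde\bL_0)$.
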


\begin{proof}
\textbf{Step 1:}
After change of basis, the new discretized process rewrites:
\begin{align}
    \bfP^{-1}\begin{bmatrix}
     \hat{\bV}_t - \Tilde\bV_t \\
      \hat{\bL}_t - \Tilde\bL_t
\end{bmatrix}
&=
\Big\{\bfI_{2p}-\psi_1(t)\underbrace{\bfP^{-1}\bfR_0\bfP}_{\triangleq \bfQ_0}-\underbrace{\bfP^{-1}\bfE_0(t)\bfP}_{\triangleq \bfN_0(t)}\Big\}
\cdot\bfP^{-1}\begin{bmatrix}
     \bV_0-\Tilde{\bV}_0  \\
      \bL_0-\Tilde{\bL}_0
\end{bmatrix}\\
&+\bfP^{-1}\begin{bmatrix}
    \varphi_2(t)(\nabla^2 f(\bL_0)-\nabla^2 f(\Tilde{\bL}_0))\bV_0 \\
    \varphi_3(t)(\nabla^2 f(\bL_0)-\nabla^2 f(\Tilde{\bL}_0))\bV_0
\end{bmatrix},
\end{align}
where
$$
\bfR_0=\begin{bmatrix}
     \gamma\bfI_p & \bfH_0 \\
      -\bfI_p & \mathbf 0_{p\times p}
\end{bmatrix},\qquad
\bfE_0(t)\triangleq \begin{bmatrix}
     \varphi_2(t)\nabla^2f(\Tilde{\bL}_0) & \mathbf 0_{p\times p}\\
     \varphi_3(t)\nabla^2f(\Tilde{\bL}_0) & -\psi_2(t)\bfH_0
\end{bmatrix}.
$$
By Minkowski's inequality and the definition of $\bfP^{-1}$, we get
\begin{align}
    \left\|\bfP^{-1}\begin{bmatrix}
     \hat{\bV}_t - \Tilde\bV_t \\
      \hat{\bL}_t - \Tilde\bL_t
\end{bmatrix}\right\|_{{\LL}_2}&\le\left\|\Big\{\bfI_{2p}-\psi_1(t) \bfQ_0- \bfN_0(t)\Big\}
\cdot\bfP^{-1}\begin{bmatrix}
     \bV_0-\Tilde{\bV}_0  \\
      \bL_0-\Tilde{\bL}_0
\end{bmatrix}\right\|_{{\LL}_2}\\
&+\xi_2(t)\left\|\left(\nabla^2 f(\bL_0)-\nabla^2 f(\Tilde{\bL}_0)\right)\cdot\bV_0\right\|_{{\LL}_2}
\end{align}
where
$$
 \xi_2(t)\triangleq\sqrt{\big(\varphi_2(t)+\gamma\varphi_3(t)\big)^2+\varphi_2(t)^2}.
$$
We have
\begin{align}
\varphi_2(t)+\gamma\varphi_3(t)
&= \int_0^t e^{-\gamma(t-s)} (\psi_1(s)+\gamma\psi_2(s))\,ds\\
&= \frac1\gamma\int_0^t e^{-\gamma(t-s)} (1-e^{-\gamma s}+s\gamma - 1+e^{-\gamma s})\,ds
\le t^2/2.
\end{align}
Therefore,
$\xi_2(t)\le t^2/\sqrt{2}$.

\textbf{Step 2:} We give an upper bound for the following spectral norm
$$
\|\bfI_{2p}-\psi_1(t) \bfQ_0- \bfN_0(t)\|\le \|\bfI_{2p}-\psi_1(t) \bfQ_0\|+\|\bfN_0(t)\|.
$$
One can check that
$$\|\bfI_{2p}-\psi_1(t)\bfQ_0\|\le1-\psi_1(t)(m/\gamma)+0.5\psi_1(t)^2M(\alpha+m^2/(M\gamma^2))$$
where $\alpha=\max(1-M/\gamma^2,3M/\gamma^2-1)$.

Direct calculation yields
\begin{align}
\bfN_0(t)&=\bfP^{-1}\bfE_0(t)\bfP\\
&=
\gamma^{-1}\begin{bmatrix}
     \bfI_{p}  & \gamma \bfI_{p} \\
     - \bfI_{p} &  \mathbf 0_{p\times p}
\end{bmatrix}
\begin{bmatrix}
     \varphi_2(t)\nabla^2f(\Tilde{\bL}_0) & \mathbf 0_{p\times p}\\
     \varphi_3(t)\nabla^2f(\Tilde{\bL}_0) & -\psi_2(t)\bfH_0
\end{bmatrix}
\begin{bmatrix}
      \mathbf 0_{p\times p} & -\gamma \bfI_{p} \\
      \bfI_{p} & \bfI_{p}
\end{bmatrix}\\
&=
\gamma^{-1}
\begin{bmatrix}
     (\varphi_2+\gamma\varphi_3)\nabla^2f(\Tilde{\bL}_0) & \mathbf 0\\
     -\varphi_2\nabla^2f(\Tilde{\bL}_0) & \mathbf 0
\end{bmatrix}
\begin{bmatrix}
      \mathbf 0 & -\gamma \bfI_{p} \\
      \bfI_{p} & \bfI_{p}
\end{bmatrix}
-\psi_2(t)\gamma^{-1}
\begin{bmatrix}
     \mathbf 0 & \gamma \bfH_0\\
     \mathbf 0 & \mathbf 0
\end{bmatrix}
\begin{bmatrix}
      \mathbf 0 & -\gamma \bfI_{p} \\
      \bfI_{p} & \bfI_{p}
\end{bmatrix}\\
&=\begin{bmatrix}
      \mathbf 0_{p\times p} & -(\varphi_2(t)+\gamma\varphi_3(t))\nabla^2f
			(\Tilde{\bL}_0) \\
      \mathbf 0_{p\times p} & \varphi_2(t)\nabla^2f(\Tilde{\bL}_0)
\end{bmatrix}-\psi_2(t)\begin{bmatrix}
     \bfH_0 & \bfH_0 \\
      \mathbf 0_{p\times p} &  \mathbf 0_{p\times p}
\end{bmatrix}.
\end{align}
Since $\nabla^2f(\Tilde{\bL}_0)$ and $\bfH_0$ are both upper bounded by $M\bfI_p$
and $0\le \psi_2(t)\le\varphi_2(t)\le\varphi_2(t)+\gamma\varphi_3(t)\le t^2/2$, we get
$$
\left\|\bfN_0(t)\right\|\le \sqrt{2}M t^2.
$$
Summing the two upper bounds, we get
$$
\left\|\bfI_{2p}-\psi_1(t)\bfQ_0-\bfN_0(t)\right\|\le\rho_t
\triangleq\left\{1-\frac{\psi_1(t)m}{\gamma}+\frac{\psi_1(t)^2M}{2}
\left(\alpha+\frac{m^2}{M\gamma^2}\right)+
M\sqrt2\, t^2\right\}.$$
Taylor's expansion ensures that $t-\gamma t^2/2\le\psi_1(t)\le t$ and, therefore,
$$
\rho_t\le1-\frac{mt}{\gamma}+\frac{Mt^2}{2}\underbrace{\left(\alpha+\frac{m^2}{M\gamma^2}+\frac{m}{M}+2\sqrt{2}\right)}_{\le 2+2\sqrt{2}\le5}.
$$
Finally, we use the condition $t\le 1/(5\gamma\varkappa)$
to bound $\rho_t$ by $1-mt/(2\gamma)$.

\textbf{Step 3:} We control the ${\LL}_2$-norm of $(\nabla^2 f(\bL_0)-\nabla^2 f(\Tilde{\bL}_0))\bV_0$.

Since $m\bfI_p\preccurlyeq\nabla^2 f(x)\preccurlyeq M\bfI_p$, combined with the fact that the Hessian is $M_2$-Lipschitz, we get
$$
\left\|\left(\nabla^2 f(\bL_0)-\nabla^2 f(\Tilde{\bL}_0)\right)\cdot\bV_0\right\|_2\le \min\left(M-m,M_2\|\bL_0-\Tilde{\bL}_0\|_2\right)\|\bV_0\|_2.
$$
Using the obvious inequality $\|\bV_0\|_2^2\le a + (\|\bV_0\|_2^2-a)_+$, for every $a>0$, 
this implies that
\begin{align}
    \bfE\left[\left\|\left(\nabla^2 f(\bL_0)-\nabla^2 f(\Tilde{\bL}_0)\right)
		\,\bV_0\right\|_2^2\right]
				&\le \bfE\left[\min\left((M-m)^2,M_2^2\|\bL_0-\Tilde{\bL}_0\|^2_2\right)
						\|\bV_0\|_2^2\right]\\
				&\le M_2^2a\,\bfE\left[\|\bL_0-\Tilde{\bL}_0\|^2_2\right]
						+(M-m)^2\bfE\left[(\|\bV_0\|_2^2-a)_+\right]\\
				&\stackrel{(1)}{\le} M_2^2a\,\|\bL_0-\Tilde{\bL}_0\|^2_{\LL_2}
						+ 4(M-m)^2e^{-(a-p)/4}, 
\end{align}
where inequality (1) is valid for every $a\ge 5p$ according to well-known bounds
on the $\chi^2$ distribution; see for instance \citep[Lemmas 5-6]{ColDal17a}. 
Finally, recall that
$$
\|\bL_0-\Tilde{\bL}_0\|_{{\LL}_2}\le\gamma^{-1}\sqrt{2}\left\|\bfP^{-1}\begin{bmatrix}
     \bV_0-\Tilde{\bV}_0  \\
      \bL_0-\Tilde{\bL}_0
\end{bmatrix}\right\|_{{\LL}_2}.
$$
Taking square roots yields the claim of the proposition.
\end{proof}
The last piece of the proof is the following proposition.
\begin{proposition}\label{prop:4}
  Assume that, for some constants $m,M,M_2>0$, the function $f$ is
$m$-strongly convex, its gradient is $M$-Lipschitz, and its Hessian is $M_2$-Lipschitz for the spectral norm. If the parameter $\gamma$ and the step size $h$ of the kinetic Langevin diffusion satisfy the inequalities
$$
\gamma^2\ge m+M,\qquad h\le\frac{1}{5\gamma\varkappa}\wedge \frac{m}{4\sqrt{5p}\,M_2}.
$$
  Then
  \begin{align}
      \left\|\bfP^{-1}\begin{bmatrix}
     {\bV}_{kh} - \Tilde\bV_{kh} \\
      {\bL}_{kh} - \Tilde\bL_{kh}
\end{bmatrix}\right\|_{{\LL}_2}
\le&
\left(1-\frac{mh}{4\gamma}\right)^k
\left\|\bfP^{-1}\begin{bmatrix}
     \bV_0-\Tilde{\bV}_0  \\
      \bL_0-\Tilde{\bL}_0
\end{bmatrix}\right\|_{{\LL}_2}
\\
&+\frac{4\sqrt{2}\,(M-m)}{m}\,\gamma h e^{-\frac{m^2}{160 M_2^2h^2}}
+\gamma h^2\left(\frac{M_2}{m}\sqrt{p^2+2p}+\frac{M^{3/2}}{m}\sqrt{p}\right).
  \end{align}
\end{proposition}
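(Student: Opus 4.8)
The plan is to read \Cref{prop:2,prop:3} as one-step estimates and assemble them into a contractive recursion for
$$
A_j \triangleq \Bigl\|\bfP^{-1}\begin{bmatrix}\bV_{jh}-\tilde\bV_{jh}\\ \bL_{jh}-\tilde\bL_{jh}\end{bmatrix}\Bigr\|_{\LL_2},\qquad j=0,\ldots,k.
$$
Fix $j$ and, on the interval $[jh,(j+1)h]$, restart the auxiliary process $(\hat\bV,\hat\bL)$ from $(\bV_{jh},\bL_{jh})$ and let it follow the same KLMC2 dynamics as $(\tilde\bV,\tilde\bL)$. After the change of basis $\bfP^{-1}$, the triangle inequality bounds $A_{j+1}$ by the $\LL_2$-norm of $\bfP^{-1}(\bV_{(j+1)h}-\hat\bV_{(j+1)h},\bL_{(j+1)h}-\hat\bL_{(j+1)h})^\top$ — the local continuous-time approximation error, controlled by \Cref{prop:2} — plus the $\LL_2$-norm of $\bfP^{-1}(\hat\bV_{(j+1)h}-\tilde\bV_{(j+1)h},\hat\bL_{(j+1)h}-\tilde\bL_{(j+1)h})^\top$, controlled by \Cref{prop:3} with $A_j$ playing the role of the initial discrepancy. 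One has to check that the hypotheses of both propositions hold at every step: the step-size bound of \Cref{prop:4} gives $h\le 1/(5\gamma\varkappa)\le 1/(5\gamma)$ and $\gamma^2\ge m+M$; moreover, since the true kinetic diffusion is run from its invariant law $\mu^*=\mathcal N(\mathbf 0_p,\bfI_p)\otimes\pi$, at each restart time $jh$ the pair $(\bV_{jh},\bL_{jh})$ is again distributed as $\mu^*$, so the identities $\bfE\|\bV_{jh}\|_2^4=p^2+2p$ and $\|\nabla f(\bL_{jh})\|_{\LL_2}\le\sqrt{Mp}$ used inside \Cref{prop:2}, and the $\chi^2$ tail bound used in \Cref{prop:3}, are legitimate at every $j$.

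Putting the two estimates together yields, for every $a\ge 5p$,
$$
A_{j+1}\le \Bigl(1-\frac{mh}{2\gamma}+\frac{M_2\sqrt a\,h^2}{\gamma}\Bigr)A_j+\sqrt2\,h^2(M-m)e^{-(a-p)/8}+0.25\,h^3\bigl(M_2\sqrt{p^2+2p}+M^{3/2}\sqrt p\bigr).
$$
The key tuning step is to choose the free parameter $a=m^2/(16M_2^2h^2)$: the restriction $h\le m/(4\sqrt{5p}\,M_2)$ guarantees $a\ge 5p$; the choice makes $M_2\sqrt a\,h^2/\gamma=mh/(4\gamma)$, so the geometric factor becomes $\le 1-mh/(4\gamma)$; and since $p\le a/5$ one gets $(a-p)/8\ge a/10=m^2/(160M_2^2h^2)$, so the per-step exponential term is $\le\sqrt2\,h^2(M-m)e^{-m^2/(160M_2^2h^2)}$.

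It then remains to unfold $A_{j+1}\le(1-mh/(4\gamma))A_j+B$, where $B$ collects the two additive contributions. Since $h\le 1/(5\gamma\varkappa)$ forces $mh/(4\gamma)\in(0,1)$, the geometric series gives $\sum_{j\ge 0}(1-mh/(4\gamma))^j=4\gamma/(mh)$, whence
$$
A_k\le\Bigl(1-\frac{mh}{4\gamma}\Bigr)^k A_0+\frac{4\gamma}{mh}\,B;
$$
the simplifications $\frac{4\gamma}{mh}\cdot\sqrt2\,h^2(M-m)=\frac{4\sqrt2\,(M-m)}{m}\,\gamma h$ and $\frac{4\gamma}{mh}\cdot 0.25\,h^3=\frac{\gamma h^2}{m}$ then produce exactly the claimed bound. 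The argument is largely bookkeeping once \Cref{prop:2,prop:3} are in hand; the only genuinely delicate point is the balance in the choice of $a$, which must be large enough to dominate $5p$ and to render the exponential term negligible, yet small enough that the perturbation $M_2\sqrt a\,h^2/\gamma$ it introduces into the geometric factor does not exceed $mh/(4\gamma)$ — and it is precisely the Hessian-Lipschitz step-size condition $h\le m/(4\sqrt{5p}\,M_2)$ that reconciles these requirements.
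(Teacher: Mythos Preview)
Your proposal is correct and follows essentially the same approach as the paper: split via Minkowski into the local discretization error (handled by \Cref{prop:2}) and the one-step contraction (handled by \Cref{prop:3}), choose $a=m^2/(16M_2^2h^2)$ so that the step-size condition $h\le m/(4\sqrt{5p}\,M_2)$ yields $a\ge 5p$ and $M_2\sqrt a\,h^2/\gamma=mh/(4\gamma)$, then unfold the resulting contractive recursion using $\sum_j(1-mh/(4\gamma))^j\le 4\gamma/(mh)$. Your explicit verification that stationarity of $(\bV_{jh},\bL_{jh})$ under $\mu^*$ justifies applying \Cref{prop:2} at every step is a nice touch that the paper leaves implicit.
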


\begin{proof}
Minkowski's inequality yields
$$
\left\|\bfP^{-1}\begin{bmatrix}
     {\bV}_{kh} - \Tilde\bV_{kh} \\
      {\bL}_{kh} - \Tilde\bL_{kh}
\end{bmatrix}\right\|_{{\LL}_2}\le\left\|\bfP^{-1}\begin{bmatrix}
     \hat{\bV}_{kh} - \Tilde\bV_{kh} \\
      \hat{\bL}_{kh} - \Tilde\bL_{kh}
\end{bmatrix}\right\|_{{\LL}_2}+\left\|\bfP^{-1}\begin{bmatrix}
     {\bV}_{kh} - \hat\bV_{kh} \\
      {\bL}_{kh} - \hat\bL_{kh}
\end{bmatrix}\right\|_{{\LL}_2}.
$$
For $k\ge 0$, define
$$
x_k=  \left\|\bfP^{-1}\begin{bmatrix}
     {\bV}_{kh} - \Tilde\bV_{kh} \\
      {\bL}_{kh} - \Tilde\bL_{kh}
\end{bmatrix}\right\|_{{\LL}_2}.
$$
By \Cref{prop:2} and \Cref{prop:3}, we thus have
$$
x_{k+1}\le \left(1-\frac{mh}{2\gamma}+\frac{M_2\sqrt{a}\,h^2}{\gamma}\right)
x_k + \sqrt{2}\,h^2(M-m)e^{-(a-p)/8}
+0.25h^3\left(M_2\sqrt{p^2+2p}+M^{3/2}\sqrt{p}\right).
$$
Assuming that $\sqrt{a} = m/(4M_2h)\ge \sqrt{5p}$ and unfolding the last recursion, we get 
\begin{align}
x_{k+1}\le \left(1-\frac{mh}{4\gamma}\right)^{k+1}
x_0 + \frac{4\sqrt{2}\,(M-m)}{m}\,\gamma h e^{-(a-p)/8}
+\gamma h^2\left(\frac{M_2}{m}\sqrt{p^2+2p}+\frac{M^{3/2}}{m}\sqrt{p}\right).
\end{align}
Easy algebra shows that
$$
\frac{a-p}{8} = \frac{a}{10} + \frac{a-5p}{40} \ge \frac{a}{10} = \frac{m^2}{160 M_2^2h^2}.
$$
This is exactly the claim of the proposition.
\end{proof}

To complete the proof of \Cref{th:4}, we need to do some
simple algebra. First of all, using the relations
\begin{align}
W_2(\nu_k,\pi) \le
       {\gamma^{-1}\sqrt{2}}\left\|\bfP^{-1}\begin{bmatrix}
     {\bV}_{kh} - \Tilde\bV_{kh} \\
      {\bL}_{kh} - \Tilde\bL_{kh}
\end{bmatrix}\right\|_{{\LL}_2},\qquad
W_2(\nu_0,\pi) = \gamma^{-1}\,
       \left\|\bfP^{-1}\begin{bmatrix}
     {\bV}_{0} - \Tilde\bV_{0} \\
      {\bL}_{0} - \Tilde\bL_{0}
\end{bmatrix}\right\|_{{\LL}_2}
\end{align}
as well as the inequality $p^2+2p\le 2p^2$ (since $p\ge 2$), we arrive at
\begin{align}
		W_2(\nu_k,\pi)
		&\le \sqrt2\left(1-\frac{mh}{4\gamma}\right)^k W_2(\nu_0,\pi)\\
		&\qquad +\frac{8\,(M-m)}{m}\, h e^{-\frac{m^2}{160 M_2^2h^2}}
+\sqrt{2}\,h^2\left(\frac{M_2p}{m}\sqrt{2}+\frac{M^{3/2}}{m}\sqrt{p}\right).\label{s10:1}
\end{align}
This leads to the claim of the theorem.

\section*{Acknowledgments}
The work of AD was partially supported by the grant
Investissements d'Avenir (ANR-11-IDEX-0003/Labex Ecodec/ANR-11-LABX-0047).

{\renewcommand{\addtocontents}[2]{}
\bibliography{Literature1}}

\begin{thebibliography}{}

\bibitem[\protect\astroncite{Baker et~al.}{2018}]{Baker2018}
Baker, J., Fearnhead, P., Fox, E.~B., and Nemeth, C. (2018).
\newblock Control variates for stochastic gradient mcmc.
\newblock {\em Statistics and Computing}.

\bibitem[\protect\astroncite{Bernton}{2018}]{pmlr-v75-bernton18a}
Bernton, E. (2018).
\newblock {L}angevin {M}onte {C}arlo and {JKO} splitting.
\newblock In Bubeck, S., Perchet, V., and Rigollet, P., editors, {\em
  Proceedings of the 31st Conference On Learning Theory}, volume~75 of {\em
  Proceedings of Machine Learning Research}, pages 1777--1798.

\bibitem[\protect\astroncite{Bou-Rabee and Hairer}{2013}]{Hairer2013}
Bou-Rabee, N. and Hairer, M. (2013).
\newblock Nonasymptotic mixing of the {MALA} algorithm.
\newblock {\em IMA J. Numer. Anal.}, 33(1):80--110.

\bibitem[\protect\astroncite{Brosse et~al.}{2017}]{brosse17a}
Brosse, N., Durmus, A., Moulines, E., and Pereyra, M. (2017).
\newblock Sampling from a log-concave distribution with compact support with
  proximal langevin monte carlo.
\newblock In Kale, S. and Shamir, O., editors, {\em Proceedings of the 2017
  Conference on Learning Theory}, volume~65 of {\em Proceedings of Machine
  Learning Research}, pages 319--342, Amsterdam, Netherlands.

\bibitem[\protect\astroncite{{Bubeck} et~al.}{2015}]{Bubeck15}
{Bubeck}, S., {Eldan}, R., and {Lehec}, J. (2015).
\newblock {Sampling from a log-concave distribution with Projected Langevin
  Monte Carlo}.
\newblock {\em ArXiv e-prints}.

\bibitem[\protect\astroncite{Chatterji et~al.}{2018}]{pmlr-v80-chatterji18a}
Chatterji, N., Flammarion, N., Ma, Y., Bartlett, P., and Jordan, M. (2018).
\newblock On the theory of variance reduction for stochastic gradient {M}onte
  {C}arlo.
\newblock In Dy, J. and Krause, A., editors, {\em Proceedings of the 35th
  International Conference on Machine Learning}, volume~80 of {\em Proceedings
  of Machine Learning Research}, pages 764--773, Stockholmsmassan, Stockholm
  Sweden.

\bibitem[\protect\astroncite{{Cheng} and {Bartlett}}{2017}]{Cheng1}
{Cheng}, X. and {Bartlett}, P. (2017).
\newblock {Convergence of Langevin MCMC in KL-divergence}.
\newblock {\em ArXiv e-prints}.

\bibitem[\protect\astroncite{{Cheng} et~al.}{2018}]{2018Cheng}
{Cheng}, X., {Chatterji}, N.~S., {Abbasi-Yadkori}, Y., {Bartlett}, P.~L., and
  {Jordan}, M.~I. (2018).
\newblock {Sharp Convergence Rates for Langevin Dynamics in the Nonconvex
  Setting}.
\newblock {\em ArXiv e-prints}.

\bibitem[\protect\astroncite{{Cheng} et~al.}{2017}]{Cheng2}
{Cheng}, X., {Chatterji}, N.~S., {Bartlett}, P.~L., and {Jordan}, M.~I. (2017).
\newblock {Underdamped Langevin MCMC: A non-asymptotic analysis}.
\newblock {\em ArXiv e-prints}.

\bibitem[\protect\astroncite{Collier and Dalalyan}{2017}]{ColDal17a}
Collier, O. and Dalalyan, A.~S. (2017).
\newblock Minimax estimation of a p-dimensional linear functional in sparse
  gaussian models and robust estimation of the mean.
\newblock submitted 1712.05495, arXiv.

\bibitem[\protect\astroncite{Dalalyan}{2017a}]{DalalyanColt}
Dalalyan, A. (2017a).
\newblock Further and stronger analogy between sampling and optimization:
  Langevin monte carlo and gradient descent.
\newblock In Kale, S. and Shamir, O., editors, {\em Proceedings of the 2017
  Conference on Learning Theory}, volume~65 of {\em Proceedings of Machine
  Learning Research}, pages 678--689.

\bibitem[\protect\astroncite{Dalalyan}{2017b}]{Dal14}
Dalalyan, A.~S. (2017b).
\newblock Theoretical guarantees for approximate sampling from a smooth and
  log-concave density.
\newblock {\em J. R. Stat. Soc. B}, 79:651--676.

\bibitem[\protect\astroncite{Dalalyan and Karagulyan}{2017}]{DalKar17}
Dalalyan, A.~S. and Karagulyan, A. (2017).
\newblock User-friendly guarantees for the langevin monte carlo with inaccurate
  gradient.
\newblock submitted 1710.00095, arXiv.

\bibitem[\protect\astroncite{Desvillettes and Villani}{2001}]{Desvillettes}
Desvillettes, L. and Villani, C. (2001).
\newblock On the trend to global equilibrium in spatially inhomogeneous
  entropy-dissipating systems: the linear {F}okker-{P}lanck equation.
\newblock {\em Comm. Pure Appl. Math.}, 54(1):1--42.

\bibitem[\protect\astroncite{{Dieuleveut} et~al.}{2017}]{2017arXiv170706386D}
{Dieuleveut}, A., {Durmus}, A., and {Bach}, F. (2017).
\newblock {Bridging the Gap between Constant Step Size Stochastic Gradient
  Descent and Markov Chains}.
\newblock {\em ArXiv e-prints}.

\bibitem[\protect\astroncite{Dolbeault et~al.}{2015}]{Dolbeault}
Dolbeault, J., Mouhot, C., and Schmeiser, C. (2015).
\newblock Hypocoercivity for linear kinetic equations conserving mass.
\newblock {\em Trans. Amer. Math. Soc.}, 367(6):3807--3828.

\bibitem[\protect\astroncite{Douc et~al.}{2004}]{douc2004}
Douc, R., Moulines, E., and Rosenthal, J.~S. (2004).
\newblock Quantitative bounds on convergence of time-inhomogeneous {M}arkov
  chains.
\newblock {\em Ann. Appl. Probab.}, 14(4):1643--1665.

\bibitem[\protect\astroncite{{Durmus} et~al.}{2018}]{Durmus4}
{Durmus}, A., {Majewski}, S., and {Miasojedow}, B. (2018).
\newblock {Analysis of Langevin Monte Carlo via convex optimization}.
\newblock {\em ArXiv e-prints}.

\bibitem[\protect\astroncite{{Durmus} and {Moulines}}{2016}]{Durmus2}
{Durmus}, A. and {Moulines}, E. (2016).
\newblock {High-dimensional Bayesian inference via the Unadjusted Langevin
  Algorithm}.
\newblock {\em ArXiv e-prints}.

\bibitem[\protect\astroncite{Durmus and Moulines}{2017}]{durmus2017}
Durmus, A. and Moulines, E. (2017).
\newblock Nonasymptotic convergence analysis for the unadjusted {L}angevin
  algorithm.
\newblock {\em Ann. Appl. Probab.}, 27(3):1551--1587.

\bibitem[\protect\astroncite{Dwivedi et~al.}{2018}]{pmlr-v75-dwivedi18a}
Dwivedi, R., Chen, Y., Wainwright, M.~J., and Yu, B. (2018).
\newblock Log-concave sampling: Metropolis-hastings algorithms are fast!
\newblock In {\em Proceedings of the 31st Conference On Learning Theory},
  volume~75 of {\em Proceedings of Machine Learning Research}, pages 793--797.

\bibitem[\protect\astroncite{{Eberle} et~al.}{2017}]{Eberle}
{Eberle}, A., {Guillin}, A., and {Zimmer}, R. (2017).
\newblock {Couplings and quantitative contraction rates for Langevin dynamics}.
\newblock {\em ArXiv e-prints}.

\bibitem[\protect\astroncite{Helffer and Nier}{2005}]{Helffer}
Helffer, B. and Nier, F. (2005).
\newblock {\em Hypoelliptic estimates and spectral theory for {F}okker-{P}lanck
  operators and {W}itten {L}aplacians}, volume 1862 of {\em Lecture Notes in
  Mathematics}.
\newblock Springer-Verlag, Berlin.

\bibitem[\protect\astroncite{Lamberton and Pag{\`e}s}{2002}]{Lamberton2}
Lamberton, D. and Pag{\`e}s, G. (2002).
\newblock Recursive computation of the invariant distribution of a diffusion.
\newblock {\em Bernoulli}, 8(3):367--405.

\bibitem[\protect\astroncite{Lamberton and Pag{\`e}s}{2003}]{Lamberton1}
Lamberton, D. and Pag{\`e}s, G. (2003).
\newblock Recursive computation of the invariant distribution of a diffusion:
  the case of a weakly mean reverting drift.
\newblock {\em Stoch. Dyn.}, 3(4):435--451.

\bibitem[\protect\astroncite{Luu et~al.}{2017}]{luu}
Luu, T.~D., Fadili, J., and Chesneau, C. (2017).
\newblock {Sampling from non-smooth distribution through Langevin diffusion}.
\newblock working paper or preprint.

\bibitem[\protect\astroncite{Nelson}{1967}]{Nelson}
Nelson, E. (1967).
\newblock {\em Dynamical Theories of Brownian Motion}.
\newblock Department of Mathematics. Princeton University.

\bibitem[\protect\astroncite{Pavliotis}{2014}]{Pavliotis14}
Pavliotis, G.~A. (2014).
\newblock {\em Stochastic processes and applications}, volume~60 of {\em Texts
  in Applied Mathematics}.
\newblock Springer, New York.
\newblock Diffusion processes, the Fokker-Planck and Langevin equations.

\bibitem[\protect\astroncite{Pillai et~al.}{2012}]{Pillai2012}
Pillai, N.~S., Stuart, A.~M., and Thi{\'e}ry, A.~H. (2012).
\newblock Optimal scaling and diffusion limits for the {L}angevin algorithm in
  high dimensions.
\newblock {\em Ann. Appl. Probab.}, 22(6):2320--2356.

\bibitem[\protect\astroncite{Raginsky et~al.}{2017}]{raginsky17a}
Raginsky, M., Rakhlin, A., and Telgarsky, M. (2017).
\newblock Non-convex learning via stochastic gradient langevin dynamics: a
  nonasymptotic analysis.
\newblock In Kale, S. and Shamir, O., editors, {\em Proceedings of the 2017
  Conference on Learning Theory}, volume~65 of {\em Proceedings of Machine
  Learning Research}, pages 1674--1703, Amsterdam, Netherlands.

\bibitem[\protect\astroncite{Roberts and Rosenthal}{1998}]{RobertsRosenthal98}
Roberts, G.~O. and Rosenthal, J.~S. (1998).
\newblock Optimal scaling of discrete approximations to {L}angevin diffusions.
\newblock {\em J. R. Stat. Soc. Ser. B Stat. Methodol.}, 60(1):255--268.

\bibitem[\protect\astroncite{Roberts and Stramer}{2002}]{RobertsStramer02}
Roberts, G.~O. and Stramer, O. (2002).
\newblock Langevin diffusions and {M}etropolis-{H}astings algorithms.
\newblock {\em Methodol. Comput. Appl. Probab.}, 4(4):337--357 (2003).

\bibitem[\protect\astroncite{Roberts and Tweedie}{1996}]{RobertsTweedie96}
Roberts, G.~O. and Tweedie, R.~L. (1996).
\newblock Exponential convergence of {L}angevin distributions and their
  discrete approximations.
\newblock {\em Bernoulli}, 2(4):341--363.

\bibitem[\protect\astroncite{Stramer and Tweedie}{1999a}]{StramerTweedie99-1}
Stramer, O. and Tweedie, R.~L. (1999a).
\newblock Langevin-type models. {I}. {D}iffusions with given stationary
  distributions and their discretizations.
\newblock {\em Methodol. Comput. Appl. Probab.}, 1(3):283--306.

\bibitem[\protect\astroncite{Stramer and Tweedie}{1999b}]{StramerTweedie99-2}
Stramer, O. and Tweedie, R.~L. (1999b).
\newblock Langevin-type models. {II}. {S}elf-targeting candidates for {MCMC}
  algorithms.
\newblock {\em Methodol. Comput. Appl. Probab.}, 1(3):307--328.

\bibitem[\protect\astroncite{Xifara et~al.}{2014}]{Xifara14}
Xifara, T., Sherlock, C., Livingstone, S., Byrne, S., and Girolami, M. (2014).
\newblock Langevin diffusions and the {M}etropolis-adjusted {L}angevin
  algorithm.
\newblock {\em Statist. Probab. Lett.}, 91:14--19.

\bibitem[\protect\astroncite{{Xu} et~al.}{2017}]{2017arXiv170706618X}
{Xu}, P., {Chen}, J., {Zou}, D., and {Gu}, Q. (2017).
\newblock {Global Convergence of Langevin Dynamics Based Algorithms for
  Nonconvex Optimization}.
\newblock {\em ArXiv e-prints}.

\bibitem[\protect\astroncite{Zhang et~al.}{2017}]{zhang17b}
Zhang, Y., Liang, P., and Charikar, M. (2017).
\newblock A hitting time analysis of stochastic gradient langevin dynamics.
\newblock In Kale, S. and Shamir, O., editors, {\em Proceedings of the 2017
  Conference on Learning Theory}, volume~65 of {\em Proceedings of Machine
  Learning Research}, pages 1980--2022, Amsterdam, Netherlands.

\end{thebibliography}

\end{document}